\documentclass[a4paper,reqno]{amsart}

\pdfoutput=1

\usepackage{changes}
\usepackage{amscd,amsthm,amsmath,amssymb,mathtools}
\usepackage{upgreek,bm}
\usepackage{verbatim}
\usepackage{color}
\usepackage{hyperref}
\usepackage{url}
\usepackage{enumerate,enumitem}

\usepackage{graphicx}
\usepackage{epstopdf,epsfig,subfigure}
\usepackage{curve2e}

\usepackage{array}

\usepackage{algorithm}
\usepackage{algorithmic}

\usepackage[numbers,sort&compress]{natbib}

\usepackage{setspace}
\onehalfspacing


\theoremstyle{plain}

\newtheorem{theorem}{Theorem}[section]

\newtheorem{lemma}[theorem]{Lemma}

\newtheorem{assumption}[theorem]{Assumption}

\theoremstyle{definition}
\newtheorem{definition}[theorem]{Definition}
\newtheorem{remark}[theorem]{Remark}
\newtheorem{example}[theorem]{Example}

\numberwithin{equation}{section}

\usepackage{geometry}\geometry{left=1.5in,right=1.5in,top=1.35in,bottom=1.5in}




\newcommand{\linspan}{\mathop{\rm span}\nolimits}

\newcommand{\rest}{\left.\kern-2\nulldelimiterspace\right|_}
\newcommand{\norm}[2]{\left|#1\right|_{#2}}
\newcommand{\dnorm}[2]{\left\|#1\right\|_{#2}}

\newcommand{\Id}{{\mathbf1}}
\newcommand{\indf}{1}

\newcommand{\p}{\partial}


\newcommand{\clA}{{\mathcal A}}

\newcommand{\clE}{{\mathcal E}}

\newcommand{\clI}{{\mathcal I}}

\newcommand{\clL}{{\mathcal L}}
\newcommand{\clM}{{\mathcal M}}

\newcommand{\clS}{{\mathcal S}}
\newcommand{\clT}{{\mathcal T}}


\newcommand{\bbN}{{\mathbb N}}

\newcommand{\bbR}{{\mathbb R}}


\newcommand{\bfR}{{\mathbf R}}

\newcommand{\bfT}{{\mathbf T}}


\newcommand{\fkC}{{\mathfrak C}}

\newcommand{\fkL}{{\mathfrak L}}

\newcommand{\fkS}{{\mathfrak S}}


\newcommand{\rmD}{{\mathrm D}}



\newcommand{\bfn}{{\mathbf n}}

\newcommand{\bft}{{\mathbf t}}


\newcommand{\rmd}{{\mathrm d}}
\newcommand{\rme}{{\mathrm e}}
\newcommand{\rmf}{{\mathrm f}}
\newcommand{\rmg}{{\mathrm g}}

\newcommand{\rml}{{\mathrm l}}

\newcommand{\rmo}{{\mathrm o}}



\newcommand{\tte}{{\mathtt e}}

\definecolor{DarkBlue}{rgb}{0,0.08,0.45}
\definecolor{DarkRed}{rgb}{.65,0,0}
\definecolor{applegreen}{rgb}{0.55, 0.71, 0.0}

\newcounter{mymac@matlab}
  \setcounter{mymac@matlab}{0}
\newcommand{\matlab}{MATLAB%
   \ifnum\value{mymac@matlab}<1%
   \textregistered%
   \setcounter{mymac@matlab}{1}%
   \fi%
  }

\providecommand{\argmin}{\operatorname*{argmin}}



\newcommand{\bfPi}{{\mathbf \Pi}}



\begin{document}
\title{Stabilization of uncertain linear dynamics: an offline-online strategy}
\author{Philipp A.~Guth$^{\tt1}$,  Karl Kunisch$^{\tt 2}$, and S\'ergio S.~Rodrigues$^{\tt1}$}
\thanks{
\vspace{-1em}\newline\noindent
{\sc MSC2020}: 93C40, 93B52, 49N10, 93B51.
\newline\noindent
{\sc Keywords}:  model parameter uncertainty, feedback adaptive control, stabilization.
\newline\noindent
$^{\tt1}$ Johann Radon Institute for Computational and Applied Mathematics,
  \"OAW, %
  Altenbergerstrasse~69, 4040~Linz, Austria.\newline\noindent
$^{\tt2}$   Institute of Mathematics and Scientific Computing,    Karl-Franzens University of Graz,	     	
Heinrichstrasse~36, 8010 Graz, Austria, and Johann Radon Institute for Computational and Applied Mathematics,
  \"OAW, %
  Altenbergerstrasse~69, 4040~Linz, Austria.\newline\noindent
{\sc Emails}:
{\small\tt   philipp.guth@ricam.oeaw.ac.at,\quad karl.kunisch@uni-graz.at,\quad\\ \hspace*{3.4em}sergio.rodrigues@ricam.oeaw.ac.at}
 }

\begin{abstract}
A strategy is proposed for adaptive stabilization of  linear systems, depending on an uncertain parameter. Offline, the Riccati stabilizing feedback input control operators, corresponding to parameters in a finite training set of chosen candidates for the uncertain parameter, are solved and stored in a library. A uniform partition of the infinite time interval is chosen. In each of these subintervals, the input is given by one of the stored parameter dependent Riccati feedback operators.  This  parameter is updated online, at the end of each  subinterval, based on input and output data, where the true data, corresponding to the true parameter, is compared to fictitious data that one would obtain in case the parameter was in a selected subset of the training set. The auxiliary data can be computed in parallel, so that the parameter update can be performed in real time.
The focus is put on the case that the unknown parameter is constant and that
the free dynamics is time-periodic.
The stabilizing performance of the input obtained by the proposed strategy is illustrated by  numerical simulations, for both constant and switching parameters.

\end{abstract}

\maketitle


\pagestyle{myheadings} \thispagestyle{plain} \markboth{\sc P.A. Guth,  K. Kunisch, and S.S. Rodrigues}
{\sc Stabilization of uncertain linear dynamics: an offline-online strategy}



\section{Introduction}
The evolution of a given real-world phenomenon is often described by a mathematical model. Uncertainty is ubiquitous in such modeling tasks.  Motivated by this situation, we consider a mathematical model given by a control system as
\begin{equation}\label{sys-intro}
\dot y=\clA_\sigma y +Bu,\quad y(0)=y_0,\qquad z=Cy,
\end{equation}
for time~$t>0$,  with state~$y(t)\in H$ evolving in a pivot Hilbert space~$H$, and where the dynamics is depending on an uncertain parameter~$\sigma\in\bbR^s$. The linear operator~$\clA_\sigma\colon V\to V'$ defining the free dynamics is time-periodic,  $\clA_\sigma(t)=\clA_\sigma(t+\rho)$, with period~$\rho=\rho(\sigma)>0$, and where~$V\subseteq H$ is another Hilbert space with continuous dual~$V'$. The control input is~$u(t)\in\bbR^{m}$, and the linear control operator~$B\colon\bbR^m\to H$ is independent of time. The initial state~$y_0\in H$ is given. The vector~$z$ represents the output of $p$~sensor measurements, with linear observation operator~$C\colon H\to\bbR^{p}$.
Above, $\dot y\coloneqq\tfrac{\rmd}{\rmd t}$ stands for the time derivative and~$m$, $p$, and~$s$ are fixed positive integers. We focus the exposition having in mind the case of infinite-dimensional systems of parabolic type, where the inclusion~$V\subset H$ is dense continuous and compact. The arguments are valid for the case of finite-dimensional systems as well, with~$V=H=\bbR^n$ for some positive integer~$n$.

In general, it is not easy to find a stabilizing feedback control operator which is able to stabilize the system for all possible values of the parameter~$\sigma\in\bbR^s$, even if we know how to compute a stabilizing feedback for every given~$\sigma$. For example, it may be not  possible/easy to identify a worst-case scenario.
Recently, in~\cite{GuthKunRod23-arx} a feedback stabilizing feedback has been proposed, based on a finite ensemble/sequence~$\varSigma=(\sigma_i)_{i=1}^N$ of training/selected parameters, which shows rather interesting robustness properties, being able to stabilize the system for parameters in a neighborhood of~$\varSigma$. The extension of the strategy in~\cite{GuthKunRod23-arx} to nonautonomous dynamics is work in progress. A direct application of the approach in~\cite{GuthKunRod23-arx} to the case of a large number~$N$ of parameters can be a nontrivial task, even for autonomous finite-dimensional systems where it requires solving for the solution~$\bfPi_\varSigma\in\bbR^{nN\times nN}$ of an algebraic  Riccati equation.
 In this manuscript, to circumvent this issue, we propose an adaptive alternative approach as follows.
We consider another operator~$Q\colon H\to H_Q$, where~$H_Q$ is another Hilbert space.
For the operators~$B$ and~$Q$, we shall assume that,
\begin{equation}\label{assum-StabDete}
(\clA_\sigma,B)\mbox{ is stabilizable\quad and\quad } (\clA_\sigma,Q)\mbox{ is detectable},\quad\mbox{for every~$\sigma\in\fkS$}.
 \end{equation}

Here the matrices~$\clA_\sigma$ are time-$\rho$ periodic, with period~$\rho=\rho(\sigma)$. That is, the time period itself can be subject to uncertainty.

 Now, denoting by~$D^*$ the adjoint of an operator~$D$, we propose to solve for the solution~$\Pi_\sigma(t)\succeq0$ (i.e., positive semidefinite) for each one of the~$N$ time-$\rho$ periodic Riccati  equations (cf.~\cite{DaPratoIchikawa90})
 \begin{equation}\label{Ricc-sig-feed}
 \dot\Pi_{\sigma}+\clA_{\sigma}^*\Pi_{\sigma}+\Pi_{\sigma} \clA_{\sigma}-\Pi_{\sigma}BB^*\Pi_{\sigma}+Q^* Q=0,\qquad\Pi_{\sigma}(t+\rho(\sigma))=\Pi_{\sigma}(t),
 \end{equation}
 for each parameter~$\sigma=\sigma_i\in\varSigma$ and store the corresponding  feedback control input operators~$K_{\sigma_i}=-B^*\Pi_{\sigma_i}\in\bbR^{m\times n}$. Here,~$n$ is the dimension of a discretization of the  possibly infinite-dimensional system~\eqref{sys-intro}.
  These tasks are done offline. Then, online, we apply the feedback input control~$u(t)=K_{\sigma_{i(j)}}y(t)$ corresponding to one of the selected parameters~$\sigma_{i(j)}\in\varSigma$, for time~$t\in I_j^\tau\coloneqq((j-1)\tau,j\tau)$, for a fixed finite time horizon~$\tau>0$, and for positive integers~$j$. This parameter is updated online, by a data-fitting strategy,  based on a comparison involving the input and output data (hereafter, IO data) to the analogue data that we would have if the true parameter were in a subset~$\varSigma^*=\varSigma^*_j\subseteq\varSigma$ of selected parameters. Here,~$\varSigma^*$ may be chosen differently in each time interval~$I_j^\tau$.
 We assume that, at initial time, we have no information on the true parameter~$\sigma$. The online update is necessary, because it is unlikely that~$\clA_\sigma+BK_{\sigma_i}$ is stable for all of the stored~$K_{\sigma_i}$.

 For the comparison, we need to solve the dynamical systems for each parameter in~$\varSigma^*$. This can be done in parallel, so that the parameter update and control input can be computed in real time. The reasons we consider taking subsets~$\varSigma^*_j\subseteq\varSigma$ is that either because the set~$\varSigma$ may be very large or because we may know (say, after some time) that some parameters in~$\varSigma$ are more meaningful/suitable than others.

 The strategy in~\cite{GuthKunRod23-arx}  provides us with an a priori feedback working for all parameters in~$\fkS$ (under some conditions on~$\fkS$), thus it has the advantage that no auxiliary systems have to be solved online. On the other hand, as we mentioned above, it requires solving, offline, a high-dimensional Riccati equation, which can be a nontrivial task.

An offline-online strategy is also proposed in~\cite{KramerPeherWillc17}. Here, the strategy is different both in the offline and online parts. Actually, the strategy in~\cite{KramerPeherWillc17} is proposed for a more general case of switching (piecewise constant) parameters~$\sigma(t)\in\bbR^s$. Our strategy is also able to respond to changes in the true parameter, as we shall see later on (see section~\ref{sec:switching}). In any case, in general, the obtained input is guaranteed to be stabilizing only if the length of the intervals of constancy (also referred to as {\em dwell time}) is large enough. This is due to possible destabilizing effects of switching signals/parameters (cf.~\cite{AkarPaulSafoMitra06,LiberzonMorse99}) rather than to inaccurate parameter estimates regardless of the utilized strategy.

In~\cite{KramerPeherWillc17}  an extra online step is performed to learn a reduced order model representative of the true dynamics through a minimization problem, and to compute the corresponding Riccati feedback. This demands a fast learning algorithm and a reduced order model in order to be able to perform fast computations (ideally, in real time).
Here, we take control inputs given by the feedback operators stored in a library and do not address such a model reduction task, at least, not beyond proposing to solve the auxiliary systems on a coarser discretization.

Another difference is that in~\cite{KramerPeherWillc17} the uncertain parameter is updated by
comparing some components of the solution selected through a  ``random'' selection operator~$\clS$; see~\cite[sect.~2.3.3]{KramerPeherWillc17}. This comparison is performed at each discrete time instant. The choice of the operator~$\clS$ is mentioned as an important and challenging task. In our approach, we propose to update the parameter, after a predefined time interval rather than at each time-instance, by comparing the available IO data in that time interval. Here it is important to know if such a comparison is sufficient to obtain a parameter update good enough to  guarantee the construction of a  stabilizing control input. We discuss this sufficiency in section~\ref{sS:rmks-Conj} and present supporting simulations in section~\ref{S:numerics}. 

Furthermore,  for a given constant parameter~$\sigma$, in~\cite[sect.~2.1]{KramerPeherWillc17} the dynamics is autonomous (time-invariant), while  here the dynamics can be nonautonomous (time-periodic). A comparison based on nonempty time intervals is likely necessary in the case of nonautonomous systems; indeed,  in this case, the examples in~\cite{Wu74} show that ``stability properties/arguments'' at/from each frozen time instant are meaningless concerning the asymptotic behavior of the solutions..

The task of looking for an update for the uncertain parameter has relations with the inverse problem of parameter identification.  Though our strategy may indeed lead to updates close to the uncertain parameter~$\sigma$, this cannot be guaranteed. But, it will provide a good enough ``identification'' of the dynamics, in order to obtain a stabilizing input.

 The  identification of a parameter~$\sigma$ is addressed, for example, in~\cite{VillaEvanChapBang18}, from the output alone, of the true dynamics. Such  identification may require the use of particular ``sufficiently exciting'' input controls. Our approach is different: instead of looking at the response of the output to appropriately constructed inputs, we use only one input, namely, the input given by one of the feedbacks stored in the library. Additionally, we  construct auxiliary IO data for fictitious dynamics, corresponding to a selected subset of training parameters. Roughly speaking, this auxiliary data will give us additional information to compensate for a possibly  not ``sufficiently exciting'' input.

Adaptive control is sometimes concerned with tracking the output of a reference stable system~\cite{NarendraHan11}, say of a targeted system corresponding to one realization of the uncertain parameter. Here, we do not use such target system, and our goal concerns stabilization (to zero).  Still, in~\cite{NarendraHan11} the strategy also involves the estimation of the uncertain parameter in order to compute the appropriate input.  We do not know whether a strategy as in~\cite{NarendraHan11} can be applied to the entire class of systems we address here, namely, the systems in~\cite{NarendraHan11} are assumed in ``companion form'' and it is not clear for us what an analogue of this form would be either in the setting of infinite-dimensional systems, as parabolic equations, or in a general time-periodic setting. The technique in~\cite{NarendraHan11}  aims at an estimate of the uncertain parameter  through a dynamical equation (see also~\cite{KrsticSmyshlyaev08} for a class of autonomous parabolic equations, with a particular class of controllers). Here, we aim at updating the parameter using a finite number of training parameters only, and use corresponding feedback input control operators stored in the library.

\subsection{Additional related literature}
Real-world phenomena are complex and mathematical models often contain simplifications, either because some class of dynamics (e.g., linear) are easier to study or because some features (e.g., nonlinear dynamics) are difficult to be fully captured, or simply due to the fact that measurements made to collect data for modeling are inherently subject to small errors. Hence, it is of paramount importance to design control strategies which are robust against  model uncertainties.

 In the context of open-loop control, this problem has been studied recently, for (spatial discretizations of) parabolic equations, for finite time-horizon, in~\cite{GuthKaarnSchilSlo22-arx,MartinezFKessMunPer16,KunothSchwab13}, and for infinite time-horizon receding horizon control in~\cite{AzmiHerrKun23-arx}.

 In the context of closed-loop (feedback) control, besides the above mentioned~\cite{GuthKunRod23-arx,KramerPeherWillc17}, we find also~\cite{Yedavalli14} where the stabilizing property of a nominal Riccati  feedback is investigated under system perturbations; \cite{ChittaroGauth18}, where stabilizability is investigated for an ensemble of Bloch equations; and~\cite{Ryan14}, where a bilinear stabilizing feedback is constructed for an ensemble of oscillators.
Finally, though we focus on stabilizability, we would like to mention selected works addressing the closely related problem of controllability~\cite{LazarLoheac22,HelmkeScho14,DanhaneLoheJung22-hal,CoulsonGhaMans19,Zuazua14}, and the chapters~\cite[Ch.~5]{Lions88}, \cite[Ch.~11.3]{TucsnakWeiss09}.

 \subsection{Strategy and main result}
We denote by~$\bbR$ the set of real numbers and by~$\bbN$ the set of nonnegative integers. Then, by~$\bbR_+=(0,+\infty)$ and~$\bbN_+\coloneqq\bbN\setminus\{0\}$ their subsets of positive numbers.
 We assume that our uncertain parameter vector~$\sigma$ is an element of a compact subset~$\fkS\subset\bbR^s$
 and that~\eqref{assum-StabDete} holds true. In a first step we choose a finite subset~$\varSigma$ of training parameters,
\begin{equation}\label{varSigma}
\varSigma=\{\sigma_i\mid 1\le i\le N\}\subset \fkS,\qquad\sigma_i\ne\sigma_j\quad\mbox{for}\quad i\ne j,
\end{equation}
Then, we compute the time-$\rho$ periodic solution~$\Pi_{\sigma_i}\succeq 0$ of the  differential Riccati equation~\eqref{Ricc-sig-feed}, for each~$\sigma_i\in\varSigma$, and store the corresponding input feedback operators~$K_{\sigma_i}(t)=-B^\top\Pi_{\sigma_i}(t)\in\bbR^{m\times n}$, for~$t\in[0,\rho)$, in a library
 \begin{equation}
K_\varSigma\coloneqq\{K_{\sigma_i}\mid 1\le i\le N\}.
\end{equation}
The idea is to use the feedback~$\Pi_{\sigma_i}$,  by taking  the~${\sigma_i\in\varSigma}$ closest to~${\sigma}$. Of course, we cannot do this directly because we do not know~$\sigma$. Looking at such~${\sigma_i\in\varSigma}$ as a guess~$\sigma^\tte_{0}=\sigma_i$ for the true~$\sigma$, we shall update~$\sigma^\tte_{0}$ as follows. At initial time~$t=0$, we take an arbitrary guess~$\sigma^\tte_{0}\in\varSigma$. Since this choice may be not necessarily associated with a stabilizing feedback, we shall update it based on IO data comparison. We fix a finite time-horizon~$\tau>0$
and denote the time interval
\begin{equation}\label{timeint-Iktau}
I_j^\tau\coloneqq((j-1)\tau,j\tau),\qquad j\in\bbN_+,
\end{equation}
where we consider the true system with a stored feedback input operator~$K_{\varsigma}$, $\varsigma\in\varSigma$,
\begin{align}
\label{sys-Feed-intro}
&&\dot y&=\clA_\sigma  y +B K_{\varsigma}y, &y(t_0)&=y_{t_0},&& t\in I_j^\tau,
\intertext{and also the auxiliary systems, under the same feedback, for parameters~$\sigma_i\in\varSigma$,}
\label{sys-Feed-intro-aux}
&&\dot y^i&=\clA_{\sigma_i}  y^i +B K_{\varsigma}y^i, &y(t_0)&=y_{t_0},&& t\in I_j^\tau.
\end{align}
To initialize we take~$(j,t_0,y_{t_0},\varsigma)=(1,0,y_0,\sigma_0^\tte)$. We let the true system~\eqref{sys-Feed-intro} operate and store its input~$u_{\sigma}\coloneqq K_{\sigma^\tte_{ {j-1}}}y$ and output~$z_{\sigma}\coloneqq Cy$.
 In parallel, we solve the auxiliary systems~\eqref{sys-Feed-intro-aux} and store the corresponding inputs~$u_{\sigma_i}\coloneqq K_{\sigma^\tte_{ {j-1}}}y^i$ and outputs~$z^i\coloneqq Cy^i$ for all $1\le i \le N$.

Next, we compare the stored IO data in order to construct an update~$\sigma^\tte_{j}$ of our previous guess~$\sigma^\tte_{j-1}$ as follows:
\begin{subequations}\label{sig-update-intro-fkC}
\begin{align}
\sigma^{\tte}_{{j}}&=\sigma_{\widehat\jmath},\quad\mbox{with}\quad\widehat\jmath=\min\argmin\limits_{1\le i\le N} \fkC(z_{\sigma_i}-z_{\sigma},u_{\sigma_i}-u_{\sigma}),
\intertext{where, for the comparison functional~$\fkC$, we shall take}
\fkC(\zeta,\eta)&\coloneqq\norm{\zeta}{L^2(I_j^\tau,\bbR^p)}^2+\norm{\eta}{L^2(I_j^\tau,\bbR^m)}^2.
\end{align}
\end{subequations}
Thus  we shall compare norms of the pair of differences~$(z_{\sigma_i}-
 z_{\sigma},u_{\sigma_i}-
 u_{\sigma})$.

The feedback operator~$K_{\sigma^\tte_{ {j}}}$ associated with~$\sigma^\tte_j=\sigma_{\widehat\jmath}\in\varSigma$ constructed in the interval~$I_\tau^j$ is used in the next interval of time~$I_\tau^{j+1}$. Repeating these steps we construct a piecewise constant update~$\sigma^\tte(t)\in\varSigma$ for the true~$\sigma$, with $\sigma^\tte(t)=\sigma^\tte_{ {j-1}}$ for~$t\in I_\tau^{j}$, $j\in\bbN_+$.

\begin{remark}
We focus on uncertainties in the modeling of the free dynamics only. We also include a numerical result on the response of the constructed input against uncertainties on the state~$y_{t_0}$ used in~\eqref{sys-Feed-intro-aux}, at the concatenating times~$t_0\in\tau\bbN$.
\end{remark}

\begin{definition}
Let~$\varepsilon>0$ be given. We say that~$\varSigma$ is~$\varepsilon$-dense in~$\fkS$ if for all~$\sigma\in\fkS$ there exists~$\varsigma\in\varSigma$ such that~$\norm{\varsigma-\sigma}{}\le\varepsilon$.
\end{definition}

We will see (in section~\ref{S:stab-detect}) that if~$\varSigma$ is~$\varepsilon$-dense in~$\fkS$, with~$\epsilon$ small enough, then for every $\sigma \in \fkS$ there is $\varsigma \in \varSigma$ such that \eqref{sys-Feed-intro} is stable, that is, there is a stored feedback~$K_\varsigma$ stabilizing the true system.

 \subsection{Contents and notation}
In Section~\ref{S:algorithm} we present an algorithm to construct a library of time-$\rho$ periodic input feedback operators  for each parameter in a finite training set~$\varSigma\subset\fkS$. Then, we present the algorithm for the online update of the estimate~$\sigma^\tte\in\varSigma$ of the uncertain parameter~$\sigma\in\fkS$, based on IO data comparison. In section~\ref{sS:robustness} we present  results, showing that, if the strategy provides us with an estimate~$\sigma^\tte(t)$ close enough to the unknown~$\sigma$, then the constructed feedback control input will be able to stabilize our system.
 A discussion on the ability of the strategy in providing us with an appropriate update is presented in section~\ref{sS:rmks-Conj}. Finally, results of several numerical examples are presented in section~\ref{S:numerics} showing the stabilizing performance of the input control provided by the proposed strategy.

\smallskip

Concerning notation, we denote by~$\bbR^{m\times n}$ the space of matrices with real entries, with~$m$ rows and~$n$ columns. We denote by~$\bbR^{1\times k}_{\rm inc}\subset\bbR^{1\times k}$ the subset of strictly increasingly ordered row vectors. For simplicity, for a given subset~$\clI\subset\bbR$ and row vector~$v\in\bbR^{1\times k}$, we shall write~$v\subset\clI$ if~$v_{1,j}\in \clI$ for all $1\le j\le k$. Finally, we denote~$\bbR^{1\times :}_{\rm inc}\coloneqq\bigcup_{k=2}^{+\infty}\bbR^{1\times k}_{\rm inc}$.

Further, let~$\clM(i_1:i_2,j_1:j_2)\in\bbR^{(i_2-i_1+1)\times(j_2-j_1+1)}$ denote the block composed of the entries~$\clM_{ij}$ (in $i$-th row and $j$-th column) in the matrix~$\clM$ with~$i_1\le i\le i_2$, and~$j_1\le j\le j_2$.
Finally, we denote by~$[a,b]_M\coloneqq\{a+\frac{k-1}{M-1}(b-a)\mid 1\le k\le M\}$ a regular mesh in the interval~$[a,b]\subset\bbR$ with~$M$ nodes, $a<b$, $M\ge2$.

\section{The algorithm}\label{S:algorithm}
We present an algorithm to compute a switching  adaptive stabilizing feedback control input based on an offline-online  strategy.
We are given a dynamical control system
\begin{equation}\label{sys-real}
\dot y_\sigma=\clA_\sigma y_\sigma +Bu,\quad y_\sigma(0)=y_\rmo,
\end{equation}
where, now, we write subscripts in the state to underline its dependence on the uncertain parameter~$\sigma\in\fkS$. We preset the offline and online components of the algorithm and show the stabilizing properties of the generated feedback control input, based on a piecewise constant update~$\sigma_\tte$ for~$\sigma$. This estimate is updated online at time instants~$k\tau$, for an a priori fixed~$\tau>0$, $k\in \bbN_+$.

The offline steps of the strategy are shown in Algorithm~\ref{Alg:Offl}, where we essentially store the Riccati feedbacks for a chosen set of training parameters~$\varSigma$.

Within the algorithms we assume that $B\in\bbR^{n\times m}$, $C\in\bbR^{p\times n}$ and~$A_\sigma\in\bbR^{n\times n}$ stand for matrices representing (e.g., after spatial discretization) our homonym operators~$B$, $C$, and~$A_\sigma$. Let us denote the set of matrices~$\clA_\varSigma\coloneqq\{\clA_{\sigma_i}\mid \sigma_i\in\varSigma\}$.

\begin{algorithm}[ht]
 \caption{Offline storage stage}
\begin{algorithmic}[1]\label{Alg:Offl}
\REQUIRE{$\varSigma=(\sigma_i)_{i= 1}^{N}\subseteq\fkS$, $B$, $Q$, and $\clA_\varSigma\coloneqq\{\clA_{\sigma_i}\mid \sigma_i\in\varSigma\}$ with time-$\rho(\sigma_i)$ periodic elements $\clA_{\sigma_i}$.}
\ENSURE{Library $K_\varSigma^\clT=\{K_{\sigma_i}^\clT\mid 1\le i\le N\}$ containing the periodic  input feedbacks~$K_{\sigma_i}(t)$ for~$t\in\clT_{\sigma_i}^K\subset[0,\rho(\sigma_i))\bigcap \bbR^{1\times :}_{\rm inc}$;
}
\FOR{$i=1:N$}
\STATE Set~$\overline\sigma=\sigma_i$;
\STATE  Solve for the time-$\rho(\overline\sigma)$ periodic solution~$\Pi_{\overline\sigma}$ of equation~\eqref{Ricc-sig-feed};
\STATE  Store the time mesh~$\clT_{\overline\sigma}^K$ and~$K_{\overline\sigma}^\clT(t)=-B^\top\Pi_{\overline\sigma}(t)\in\bbR^{m\times n}$,~$t\in\clT_{\overline\sigma}^K$;
\ENDFOR
 \end{algorithmic}
\end{algorithm}

The online stage is based on a concatenation over the time intervals~$I_j^\tau$ in~\eqref{timeint-Iktau}. This concatenation is described in Algorithm~\ref{Alg:Onl-concat}, and the procedure in each interval~$I_j^\tau$ is presented in Algorithm~\ref{Alg:Onl-1int}.

\begin{algorithm}[ht]
 \caption{Online stage for an interval with a constant guess/estimate~$\sigma^{\rm old}\in\varSigma$}
\begin{algorithmic}[1]\label{Alg:Onl-1int}
\REQUIRE{$\varSigma^{*}=(\sigma_i^*)_{i= 1}^{N^*}\subseteq\varSigma$,  $\clA_\varSigma\coloneqq\{\clA_{\sigma_i}\mid \sigma_i\in\varSigma\}$ with time-$\rho(\sigma_i)$ periodic elements, $B$, $C$, $\sigma^{\rm old}\in\varSigma^{*}$, $t_0\ge0$, $\tau>0$, integer~$\bft\ge2$, $y^{\rm old}\in\bbR^{n\times1}$, input feedback~$K_\varSigma^\clT$  library stored by Algorithm~\ref{Alg:Offl}.}
\ENSURE{state~$y^{\rm new}\in\bbR^{n\times1}$ at time~$t=t_0+\tau$;  update~$\sigma^{\rm new}\in\varSigma^{*}$;
}
\STATE Set~$K=K_{\sigma^{\rm old}}^\clT$;
\STATE Set mesh~$\bfT=[t_0,t_0+\tau]_\bft$;
\STATE Set~$U=0\in\bbR^{mN^*\times \bft}$ and~$Z=0\in\bbR^{pN^*\times \bft}$\label{Alg:Onl-1int-Par1}
\STATE\label{Alg:Onl.run-real}  \begin{enumerate}[leftmargin=1em,itemindent=.2em,label=\alph*.]
\item Let the true system~$(\clA_{\sigma},B)$ in~\eqref{sys-real} (cf.~\eqref{sys-Feed-intro}) operate, for time~$t\in[t_0,t_0+\tau]$, under the input feedback~$u_\sigma=K(t) y_\sigma(t)$;\label{Alg:Onl.run-real1}
\item Set~$y^{\rm new}=y(t_0+\tau)$;
\item Store input~$u_\sigma(1:m,1:\bft)\in\bbR^{m\times \bft}$ and output~$z_\sigma(1:p,1:\bft)\in\bbR^{p\times \bft}$,
for time~$t\in\bfT$.\label{Alg:Onl.run-real3}
\end{enumerate}
\setcounter{ALC@line}{3}
\STATE {\bf for } $i=1:N^*$ {\bf do} \label{Alg:Onl.run-auxil}
\begin{enumerate}[leftmargin=1em,itemindent=.2em,label=\Alph*.]
\item Solve the auxiliary systems~$(\clA_{\sigma_i^*},B)$ in~\eqref{sys-real} (cf.~\eqref{sys-Feed-intro-aux}) for each~$\sigma_i^*\in\varSigma^*$, with initial state~$y_{\sigma_i^*}(t_0)=y^{\rm old}$, with feedback input~$u_{\sigma_i^*}(t)=K(t) y_{\sigma_i^*}(t)$;\label{Alg:Onl.run-auxil1}
\item Store input~$U(1+m(i-1):mi,1:\bft)=u_{\sigma_i^*}(1:m,1:\bft)$,\\
and output~$Z(1+m(i-1):mi,1:\bft)=Cy_{\sigma_i^*}(1:m,1:\bft)$, for time~$t\in\bfT$;\label{Alg:Onl.run-auxil2}
\end{enumerate}
{\bf end for}
\STATE Set~$E=0\in\bbR^{N^*\times 1}$;\label{Alg:Onl-1int-Upsig1}
\FOR{$i=1:N^*$}
\STATE  Set~$D_u=U(1+m(i-1):mi,1:\bft)-u_{\sigma}(1:m,1:\bft)$\\
and~$D_z=Z(1+m(i-1):mi,1:\bft)-z_{\sigma}(1:m,1:\bft)$
\STATE Set $E(i,1)=\fkC(D_z,D_u)$, with~$\fkC$ as in~\eqref{sig-update-intro-fkC};
\ENDFOR
\STATE
Set~$\sigma^{\rm new}=\sigma^*_{\widehat\jmath}$, with $\widehat\jmath\coloneqq\min\argmin\limits_{1\le j\le N^*}E(j,1)$;\label{Alg:Onl-1int-Upsig5}
 \end{algorithmic}
\end{algorithm}

We can see that in Algorithm~\ref{Alg:Onl-1int}, the steps~\ref{Alg:Onl.run-auxil}:(\ref{Alg:Onl.run-auxil1}--\ref{Alg:Onl.run-auxil2}) can be  performed in parallel with, and independently from, steps~\ref{Alg:Onl.run-real}:(\ref{Alg:Onl.run-real1}--\ref{Alg:Onl.run-real3}). Then, the update of the parameter estimate is performed through the steps~\ref{Alg:Onl-1int-Upsig1}--\ref{Alg:Onl-1int-Upsig5}; which involve elementary operations and can be performed in short time.

Though the computations within steps~\ref{Alg:Onl.run-auxil}:(\ref{Alg:Onl.run-auxil1}--\ref{Alg:Onl.run-auxil2}), in Algorithm~\ref{Alg:Onl-1int}, can be  performed  while our system is running (in steps~\ref{Alg:Onl.run-real}:(\ref{Alg:Onl.run-real1}--\ref{Alg:Onl.run-real3})), we may need a large computational effort to solve the equation for all parameters in~$\varSigma$ if $N=\#\varSigma$ is large. In some cases, we might be able to realize that some parameters can be discarded. This is the reason we consider a subset~$\varSigma^*\subset\varSigma$ within Algorithm~\ref{Alg:Onl-1int}. Within Algorithm~\ref{Alg:Onl-concat} we take this subset as the union~$\varSigma^\rmg\cup\varSigma^\rml\subseteq\varSigma$. The subset~$\varSigma^\rml$ contains parameters in a  local neighborhood of our latest estimate. It can be seen as an attempt at locally improving the estimate of the unknown parameter, while the use of the subset~$\varSigma^\rmg$ serves the purpose of keeping a  global look at the parameters in~$\varSigma$.

\begin{algorithm}[ht]
 \caption{Online stage: concatenation/implementation, for~$t\in(0,T)\subseteq(0,+\infty)$}
\begin{algorithmic}[1]\label{Alg:Onl-concat}
\REQUIRE{$\varSigma=(\sigma_i)_{i= 1}^ N$, $N_\rmg\le N$,  $\gamma\ge0$, $\clA_\varSigma$ with time periodic elements, $B$, $C$, $\widehat\sigma\in\varSigma$, $\tau>0$, $y_{\rme0}\in\bbR^{n\times1}$, integer $\bft\ge2$, $T\in(0,+\infty)\cup\{+\infty\}$.}
\STATE Set $t_0=0$; $y^{\rm old}=y_{\rme0}$;
\WHILE{$t_0< T$}
\STATE Set a random sequence~$\clI^\rmg$ of $N_\rmg$ integers in~$[1,N]$; set~$\varSigma^\rmg\coloneqq\{\sigma_i\mid i\in\clI^\rmg\}$;\label{Alg:Onl-concat-Sig*}
\STATE Set~$\varSigma^\rml\coloneqq \{\sigma\in\varSigma\mid\norm{\sigma-\widehat\sigma}{}\le\gamma\}$;
\STATE Set  $\Sigma^{*}=\varSigma^\rmg\cup\varSigma^\rml$;
\STATE Let our system operate for time~$t\in[t_0,t_0+\tau]$ and use Algorithm~\ref{Alg:Onl-1int} to find~$(y^{\rm new},\sigma^{\rm new})$;
\STATE Update~$y^{\rm old}=y^{\rm new}$; $\widehat\sigma=\sigma^{\rm new}$;
\STATE Shift $t_0\to t_0+\tau$;\label{Alg:Onl-concat-shiftt0}
\ENDWHILE
 \end{algorithmic}
\end{algorithm}

\begin{remark}
The stabilization problem  corresponds to taking~$T=+\infty$ in Algorithm~\ref{Alg:Onl-concat}. We include the case of a finite time-horizon, simply because it is convenient for later reference, when we apply the strategy up to a given finite time~$T>0$.
\end{remark}
\begin{remark}
In Algorithm~\ref{Alg:Offl} the time-$\rho$ periodic input feedback is saved for time~$t$ in a discrete mesh~$\clT(\sigma)\subset[0,\rho(\sigma))$. In Algorithm~\ref{Alg:Onl-1int}, we can apply these operators to a general~$t>0$ following, for example, the procedure in~\cite[sect.~3.6.2]{Rod23-eect}, namely, first we select~$\overline t\in[0,\rho)$ so that~$t-\overline t\in \rho\bbN$, next we find elements $\clT^{\rm ex}_{1,k}$ and~$\clT^{\rm ex}_{1,k+1}$ in~$\clT^{\rm ex}\coloneqq \begin{bmatrix}\clT &\rho\end{bmatrix}$ so that $\clT^{\rm ex}_{1,k}\le\overline t<\clT^{\rm ex}_{1,k+1}$, and use a convex combination of the stored operators at time instants~$\clT^{\rm ex}_{1,k}$ and~$\clT^{\rm ex}_{1,k+1}$.
\end{remark}

\section{Adaptive stabilization}\label{S:stab-detect}
The stabilizing performance of the constructed input depends on appropriate continuity properties. For this purpose, it is assumed  that the Riccati based feedback input operator~$K_\sigma$ depends continuously on the parameter~$\sigma\in\fkS$.  In addition, we need a set of training parameters~$\varSigma$ which is ~$\varepsilon$-dense in~$\fkS$, with~$\varepsilon>0$ sufficiently small.

\subsection{Stabilizability and detectability}\label{sS:StabDete}
Here, we recall some terminology for a fixed parameter $\sigma \in \fkS$. We start by considering~\eqref{sys-intro} under a feedback control input~$u(t)=K(t)y(t)$ as,
\begin{equation}\label{sys-free}
\dot y=(\clA_\sigma +BK)y,\quad y(0)=y_0,
\end{equation}
with~$\clA_\sigma\in L^\infty(\bbR_+;\clL(V,V'))$, control operator~$B\in\clL(\bbR^m,V')$, and input feedback  control operator~$K\in L^\infty(\bbR_+;\clL(V,\bbR^m))$.

\begin{definition}\label{D:stable}
Let~$\zeta\ge1$ and~$\mu>0$. The operator~$\clA_\sigma +BK$ is $(\zeta,\mu)$-stable if, for every~$y_0\in H$, the solution of~\eqref{sys-free} satisfies
\begin{equation}\notag
\norm{y(t)}{H}\le \zeta\rme^{-\mu(t-s)}\norm{y(s)}{H},\quad\mbox{for all}\quad t\ge s\ge0.
\end{equation}
\end{definition}

\begin{definition}\label{D:stabiliz}
The pair~$(\clA_\sigma,B)$ is $(\zeta,\mu)$-stabilizable if there is~$K\in L^\infty(\bbR_+;\clL(V,\bbR^m))$ such that~$\clA_\sigma+BK$ is $(\zeta,\mu)$-stable.
\end{definition}

Let us now consider also the operator~$Q\in\clL(V,H_Q)$ in the Riccati equation~\eqref{Ricc-sig-feed}.

\begin{definition}\label{D:detect}
The pair~$(\clA_\sigma,Q)$ is $(\zeta,\mu)$-detectable if there is~$L\in L^\infty(\bbR_+;\clL(H_Q,V'))$ such that~$\clA_\sigma+LQ$ is $(\zeta,\mu)$-stable.
\end{definition}

If there exists a pair~$(\zeta,\mu)$ as in Definition~\ref{D:stable}/\ref{D:stabiliz}/\ref{D:detect}, but we do not need to precise it, we will simply say  exponentially  stable/stabilizable/detectable, respectively.

\subsection{Towards adaptive stabilization}\label{sS:robustness}
We recall stability results under Riccati based feedback controls. Then, we address adaptive stabilization   properties.

\begin{assumption}
The uncertain parameter~$\sigma$ is contained in a compact subset~$\fkS\subset\bbR^s$.
\end{assumption}

\begin{assumption}\label{A:stabidetect-sig}
The pair~$(\clA_\sigma,B)$ is exponentially stabilizable and the pair~$(\clA_\sigma,Q)$ is exponentially detectable.
\end{assumption}

\begin{assumption}\label{A:cont-Asig}
The operator~$\clA_\sigma\in L^\infty(\bbR_+;\clL(V,V'))$ is time-$\rho$ periodic and depends continuously on~$\sigma\in\fkS$.
\end{assumption}

\begin{theorem}\label{T:stabRicc}
Under Assumptions~\ref{A:stabidetect-sig} and~\ref{A:cont-Asig}, if~$\Pi_\sigma\succeq0$ solves~\eqref{Ricc-sig-feed}, then~$\clA_\sigma-BB^\top\Pi_\sigma$ is exponentially stable.
\end{theorem}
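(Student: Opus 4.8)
The plan is to establish exponential stability of the closed-loop generator $\clA_\sigma - BB^\top\Pi_\sigma$ via the standard Lyapunov argument for periodic Riccati equations, using $\Pi_\sigma$ itself as a (time-periodic) Lyapunov functional. First I would fix $\sigma\in\fkS$ and write $\clA_\sigma^{\rm cl}(t)\coloneqq\clA_\sigma(t) - BB^\top\Pi_\sigma(t)$, so that the Riccati equation \eqref{Ricc-sig-feed} can be rearranged as the closed-loop Lyapunov identity
\begin{equation}\notag
\dot\Pi_\sigma + (\clA_\sigma^{\rm cl})^\top\Pi_\sigma + \Pi_\sigma\clA_\sigma^{\rm cl} = -\,Q^\top Q - \Pi_\sigma BB^\top\Pi_\sigma \eqqcolon -R_\sigma \preceq 0.
\end{equation}
For a solution $y$ of \eqref{sys-free} with $K = -B^\top\Pi_\sigma$, differentiating $V(t)\coloneqq\langle\Pi_\sigma(t)y(t),y(t)\rangle_H$ and using this identity gives $\dot V(t) = -\langle R_\sigma(t)y(t),y(t)\rangle_H \le 0$, so $V$ is nonincreasing along trajectories. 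Since $\Pi_\sigma\succeq0$ is periodic and (by \cite{DaPratoIchikawa90} together with Assumption~\ref{A:stabidetect-sig}) the stabilizing solution is bounded, $V(t)\le \|\Pi_\sigma\|_\infty\|y(t)\|_H^2$, which already yields boundedness of trajectories; the work is to upgrade this to exponential decay.

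The key step is to show $V$ is coercive, i.e.\ $\Pi_\sigma(t)\succeq\delta\,\Id$ for some $\delta=\delta(\sigma)>0$ uniformly in $t$, which is where detectability enters. The standard route: integrate the Lyapunov identity against the closed-loop flow to get, for $T\ge0$,
\begin{equation}\notag
\langle\Pi_\sigma(0)y(0),y(0)\rangle_H = \langle\Pi_\sigma(T)y(T),y(T)\rangle_H + \int_0^T \bigl(\|Qy\|_{H_Q}^2 + \|B^\top\Pi_\sigma y\|_{\bbR^m}^2\bigr)\,\ed t,
\end{equation}
so $\Pi_\sigma(0)\succeq0$ dominates the observation energy of the closed-loop trajectory. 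One then argues: if $\langle\Pi_\sigma(0)y_0,y_0\rangle_H$ were too small, both $\int\|Qy\|^2$ and $\int\|B^\top\Pi_\sigma y\|^2$ would be small, hence the closed-loop trajectory would be close (over one period, and then over all periods by the periodic structure) to a trajectory of $\clA_\sigma$ that is nearly unobserved through $Q$; detectability of $(\clA_\sigma,Q)$ — the existence of $L$ with $\clA_\sigma+LQ$ stable, together with the comparison $\dot y=\clA_\sigma y = (\clA_\sigma+LQ)y - L(Qy)$ — then forces $y_0$ itself to be small. This gives $\Pi_\sigma(0)\succeq\delta\Id$; periodicity propagates the bound to all $t$. (Alternatively one invokes directly the characterization of the stabilizing periodic Riccati solution from \cite{DaPratoIchikawa90}, which packages exactly this detectability-to-coercivity implication.)

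With coercivity and monotonicity in hand, the decay is routine: from the integrated identity over $[t,t+\rho]$ and coercivity, $V(t+\rho)\le V(t) - c\int_t^{t+\rho}\|y\|_H^2\,\ed t$ for some $c>0$, and one shows $\int_t^{t+\rho}\|y\|_H^2\,\ed t \ge \kappa\,\|y(t+\rho)\|_H^2 \ge \kappa'\,V(t+\rho)$ using boundedness of $\clA_\sigma^{\rm cl}$ (a Gr\"onwall estimate backward in time over the compact interval $[t,t+\rho]$, whose constant is uniform by periodicity). Hence $V(t+\rho)\le q\,V(t)$ with $q=q(\sigma)\in(0,1)$; iterating over periods and interpolating on each period with the Gr\"onwall bound converts this to $V(t)\le \zeta_0^2\,\rme^{-2\mu(t-s)}V(s)$, and then $\|y(t)\|_H^2\le \delta^{-1}V(t)\le \delta^{-1}\zeta_0^2\|\Pi_\sigma\|_\infty\rme^{-2\mu(t-s)}\|y(s)\|_H^2$, which is Definition~\ref{D:stable}. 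The main obstacle is the coercivity step: extracting the strictly positive lower bound $\Pi_\sigma\succeq\delta\Id$ from detectability in the time-periodic, possibly infinite-dimensional setting — this is the only place the argument is not a one-line computation, and it is cleanest to cite the periodic-Riccati theory of \cite{DaPratoIchikawa90} for the existence of a bounded, coercive, stabilizing periodic solution rather than reprove it. Note that Assumption~\ref{A:cont-Asig} (continuity in $\sigma$) is not needed for this theorem as stated for a single $\sigma$; it is listed because it is used for the subsequent uniform/adaptive statements, and at most the $L^\infty$-periodicity part of it is invoked here.
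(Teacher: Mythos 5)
Your route (a Lyapunov argument with $V(t)=\langle\Pi_\sigma(t)y(t),y(t)\rangle_H$) is different from the paper's, which gives no self-contained proof but cites the LQ characterization of $\Pi_\sigma$, uniqueness under detectability, and the periodic-Riccati theory of \cite{DaPratoIchikawa90}. However, your argument has a genuine gap at its central step: detectability of $(\clA_\sigma,Q)$ does \emph{not} imply coercivity $\Pi_\sigma(t)\succeq\delta\,\Id$. Take $\clA_\sigma$ already exponentially stable and $Q=0$ (or $Q$ vanishing on a stable invariant subspace): then $(\clA_\sigma,Q)$ is detectable with $L=0$, and $\Pi_\sigma=0$ is a positive semidefinite periodic solution of \eqref{Ricc-sig-feed} — certainly not coercive, although the conclusion of the theorem still holds. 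Your heuristic ``small observation energy forces $y_0$ to be small'' is exactly what fails: detectability only says that unobserved trajectories \emph{decay}, not that they start small; that implication would require an observability-type hypothesis, and \cite{DaPratoIchikawa90} does not package a ``detectability-to-coercivity'' statement. The same issue reappears in your contraction step: $V(t+\rho)\le V(t)-c\int_t^{t+\rho}\|y\|_H^2\,\rmd t$ needs the dissipation $\int\bigl(\|Qy\|_{H_Q}^2+\|B^\top\Pi_\sigma y\|_{\bbR^m}^2\bigr)$ to dominate $c\int\|y\|_H^2$, i.e.\ a uniform observability inequality for the closed loop, which again does not follow from detectability (same counterexample). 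So both the coercivity and the strict-dissipation inequalities on which your exponential decay rests are unavailable under the stated hypotheses.

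The repair is to use detectability in the standard way and avoid coercivity entirely. Your integrated identity already gives $\int_0^{\infty}\bigl(\|Qy\|_{H_Q}^2+\|B^\top\Pi_\sigma y\|_{\bbR^m}^2\bigr)\rmd t\le\langle\Pi_\sigma(0)y_0,y_0\rangle_H\le\|\Pi_\sigma\|_{L^\infty}\|y_0\|_H^2$, since $\Pi_\sigma(T)\succeq0$ and $\Pi_\sigma$ is bounded (periodic). Now write the closed loop as $\dot y=(\clA_\sigma+LQ)y-L(Qy)-BB^\top\Pi_\sigma y$ with $\clA_\sigma+LQ$ exponentially stable by Assumption~\ref{A:stabidetect-sig}; Duhamel's formula together with the $L^2$-in-time bounds on $Qy$ and $B^\top\Pi_\sigma y$ yields $\int_0^{\infty}\|y\|_H^2\,\rmd t\le C\|y_0\|_H^2$ (the estimate is the same convolution computation the paper performs in the proof of Lemma~\ref{L:stabfeed-pertsig}), and Datko's theorem \cite[Thm.~1]{Datko72} then gives exponential stability of $\clA_\sigma-BB^\top\Pi_\sigma$. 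With this replacement the rest of your write-up (the Lyapunov identity and the integrated energy balance) is correct and does yield the theorem; your closing remark that only the periodicity part of Assumption~\ref{A:cont-Asig} is used here is also accurate.
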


Concerning Theorem~\ref{T:stabRicc}, we know that once we have stabilizability (cf. Assumption~\ref{A:stabidetect-sig}), the solution minimizing a  suitable linear quadratic problem is given by a positive semi-definite solution of~\eqref{Ricc-sig-feed} (cf.~\cite[sect~2.3]{Rod23-eect}). Uniqueness follows from detectability in Assumption~\ref{A:stabidetect-sig} (see~\cite[Thm.~3.8]{DaPrato87}). The time-$\rho$ periodicity of~$\Pi_\sigma$ follows from the time periodicity in Assumption~\ref{A:cont-Asig} together with the dynamic programming principle, recalling that~$\frac12(\Pi_\sigma(s) w,w)_H$ will give us the ``cost to go'' from time $t=s$ up to~$t=+\infty$ for a given state~$y(s)=w$. Indeed, the periodicity of the dynamics imply that we must have~$\frac12(\Pi_\sigma(s) w,w)_H=\frac12(\Pi_\sigma(s+\rho) w,w)_H$ (alternatively, we can follow  arguments as in~\cite[Proof of Prop.~3.4]{DaPratoIchikawa90}).

Next, we present continuity results towards robustness against small perturbations of~$\sigma$. We need an assumption on the continuous dependence of the Riccati solutions on~$\sigma$;  see~\cite[sect.~4]{RanRodman88} for finite-dimensional systems and~\cite[Thm.~V.2]{Curtain13} including infinite-dimensional systems, for results in the autonomous case.
\begin{assumption}\label{A:cont-Pisig}
The solution~$\Pi_\sigma\in L^\infty((0,\rho);\bbR^{n\times n}_{\succeq0})$ of the Riccati equation~\eqref{Ricc-sig-feed} depends continuously on~$\sigma\in\fkS$. Furthermore, there exist~$D\ge1$ and~$\lambda>0$ such that for all~$\sigma\in\fkS$ we have that~$\clA_\sigma-BB^\top\Pi_{\sigma}$  is~$(D,\lambda)$-stable.\end{assumption}

\begin{lemma}\label{L:stabfeed-pertsig}
Under Assumption~\ref{A:cont-Pisig}, there exists~$\varepsilon>0$ such that for every~$(\sigma,\varsigma)\in\fkS\times\fkS$ with~$\norm{\sigma-\varsigma}{}<\varepsilon$, it follows that~$\clA_\sigma-BB^\top\Pi_{\varsigma}$ is stable.
\end{lemma}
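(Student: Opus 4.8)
The plan is a perturbation argument comparing the closed-loop generators $\clA_\sigma - BB^\top\Pi_\varsigma$ and $\clA_\varsigma - BB^\top\Pi_\varsigma$. By Assumption~\ref{A:cont-Pisig}, the latter is $(D,\lambda)$-stable with $D\ge1$ and $\lambda>0$ \emph{uniform} in $\varsigma\in\fkS$, so the natural idea is to write
\begin{equation}\notag
\clA_\sigma - BB^\top\Pi_\varsigma = \bigl(\clA_\varsigma - BB^\top\Pi_\varsigma\bigr) + \bigl(\clA_\sigma - \clA_\varsigma\bigr),
\end{equation}
treat $\clA_\sigma-\clA_\varsigma$ as a bounded (in the $L^\infty(\bbR_+;\clL(V,V'))$ sense) time-periodic perturbation, and invoke robustness of exponential stability under small perturbations of the generator. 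First I would fix the uniform constants $D,\lambda$ from Assumption~\ref{A:cont-Pisig}, and also record that $\sup_{\varsigma\in\fkS}\norm{\Pi_\varsigma}{}=:M<\infty$, which follows from the assumed continuity of $\varsigma\mapsto\Pi_\varsigma$ together with compactness of $\fkS$; this bound is needed so that $BB^\top\Pi_\varsigma$ has operator norm controlled uniformly in $\varsigma$, although for this particular lemma only the stability of the ``reference'' operator $\clA_\varsigma-BB^\top\Pi_\varsigma$ and smallness of $\clA_\sigma-\clA_\varsigma$ are actually used.

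The key analytic step is the standard perturbation lemma for exponential stability: if $\clG$ generates (in the appropriate evolution-operator sense, since the dynamics are nonautonomous but time-$\rho$ periodic) a $(D,\lambda)$-stable evolution and $\clP\in L^\infty(\bbR_+;\clL(V,V'))$ with $\norm{\clP}{L^\infty(\bbR_+;\clL(V,V'))}$ small enough (depending only on $D$ and $\lambda$, e.g.\ strictly less than $\lambda/(2D)$ after absorbing the $V\hookrightarrow H$ embedding constant), then $\clG+\clP$ is $(2D,\lambda/2)$-stable. This is proved by writing the variation-of-constants formula for the perturbed evolution operator against the unperturbed one and applying Grönwall's inequality; I would cite the analogous statement used elsewhere in this line of work (e.g.\ the perturbation arguments in \cite{Rod23-eect}) rather than reproduce the estimate. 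Applying this with $\clG=\clA_\varsigma-BB^\top\Pi_\varsigma$ and $\clP=\clA_\sigma-\clA_\varsigma$ gives a threshold $\delta=\delta(D,\lambda)>0$ such that $\norm{\clA_\sigma-\clA_\varsigma}{L^\infty(\bbR_+;\clL(V,V'))}<\delta$ implies $\clA_\sigma-BB^\top\Pi_\varsigma$ is stable.

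Finally I would convert the ``$\clA$-closeness'' condition into the desired ``$\sigma$-closeness'' condition: by Assumption~\ref{A:cont-Asig}, the map $\sigma\mapsto\clA_\sigma$ is continuous from $\fkS$ into $L^\infty(\bbR_+;\clL(V,V'))$, and since $\fkS$ is compact it is in fact uniformly continuous there; hence there exists $\varepsilon>0$ such that $\norm{\sigma-\varsigma}{}<\varepsilon$ forces $\norm{\clA_\sigma-\clA_\varsigma}{L^\infty(\bbR_+;\clL(V,V'))}<\delta$. This $\varepsilon$ works simultaneously for all pairs $(\sigma,\varsigma)\in\fkS\times\fkS$, which is exactly the statement of the lemma. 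The main obstacle I anticipate is purely bookkeeping in the nonautonomous setting: one must phrase ``$(D,\lambda)$-stable'' in terms of the two-parameter evolution family and make sure the perturbation lemma is invoked in a form valid for $L^\infty$-in-time coefficients with values in $\clL(V,V')$ (so that the perturbation acts from $V$ to $V'$, matching the functional setting of \eqref{sys-free}), rather than in the cleaner semigroup form; but no new idea beyond Grönwall is required, and uniformity is automatic from compactness of $\fkS$.
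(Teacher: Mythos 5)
Your proposal is correct and shares the paper's global structure: the same splitting $\clA_\sigma-BB^\top\Pi_\varsigma=(\clA_\varsigma-BB^\top\Pi_\varsigma)+(\clA_\sigma-\clA_\varsigma)$, the same reliance on the uniform $(D,\lambda)$-stability from Assumption~\ref{A:cont-Pisig}, and the same final conversion of smallness of $\clA_\sigma-\clA_\varsigma$ into smallness of $\norm{\sigma-\varsigma}{}$ via the continuity in Assumption~\ref{A:cont-Asig} (your explicit appeal to uniform continuity on the compact $\fkS$ is the right way to obtain a single $\varepsilon$ valid for all pairs, a point the paper leaves implicit). The genuine difference is the middle step: you invoke a Duhamel-plus-Gr\"onwall perturbation lemma yielding exponential stability with explicit perturbed constants once $\dnorm{\clA_\sigma-\clA_\varsigma}{}$ is below a threshold depending only on $(D,\lambda)$, whereas the paper squares the Duhamel estimate, integrates over $(s_0,+\infty)$ to obtain the $L^2$-in-time bound \eqref{2sig-estL2y-2}, and then concludes exponential stability from Datko's theorem \cite{Datko72}; the Datko route is what feeds the paper's subsequent quantitative remark \eqref{2sig-estL2y-3} on achievable decay rates, while your route is more elementary and gives the perturbed $(\zeta,\mu)$ directly. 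One caution on bookkeeping: the paper measures the perturbation as $\dnorm{E}{}=\norm{E}{L^\infty((0,+\infty);\clL(H))}$, i.e.\ it implicitly uses that $\clA_\sigma-\clA_\varsigma$ is bounded on $H$; your suggestion that an $\clL(V,V')$-bound can be used ``after absorbing the $V\hookrightarrow H$ embedding constant'' does not literally work, since the embeddings $V\hookrightarrow H\hookrightarrow V'$ give no control of $\norm{(\clA_\sigma-\clA_\varsigma)y}{H}$ by $\norm{y}{H}$ — one must either assume the difference is $\clL(H)$-bounded (as the paper effectively does) or add a parabolic smoothing estimate. Since the paper makes the same simplification, this is not a gap relative to it, but the $\clL(V,V')$ phrasing of your perturbation lemma should be adjusted accordingly.
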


\begin{proof}
We write~$\clA_\sigma-BB^\top\Pi_{\varsigma}=\clA_\varsigma-BB^\top\Pi_{\varsigma}+\clA_\sigma-\clA_\varsigma$. By Assumption~\ref{A:cont-Pisig}, the matrix~$A\coloneqq\clA_\varsigma-BB^\top\Pi_{\varsigma}$ is $(D,\lambda)$-stable, with~$D\ge1$ and~$\lambda>0$. Due to Duhamel's formula,
every solution~$y$ of
\begin{align}\notag
\dot y=(\clA_\sigma-BB^\top\Pi_{\varsigma})y
\end{align}
satisfies, for~$t\ge s_0\ge0$,
\begin{align}\notag
\norm{y(t)}{}\le D\rme^{-\lambda (t-s_0)}\norm{y(s_0)}{}+D\int_{s_0}^t\rme^{-\lambda(t-s)}\norm{(\clA_\sigma-\clA_\varsigma) y(s)}{}\,\rmd s,
\end{align}
which leads us to
\begin{align}
&\norm{y(t)}{}^2\le 2D^2\left(\rme^{-2\lambda (t-s_0)}\norm{y({s_0})}{}^2+\left(\int_{s_0}^t\rme^{-\lambda(t-s)}\norm{(\clA_\sigma-\clA_\varsigma) y(s)}{}\,\rmd s\right)^2\right).\label{2sig-estL2y-1}
\end{align}
With~$E\coloneqq\clA_\sigma-\clA_\varsigma$, and~$\|E\|\coloneqq\norm{E}{L^\infty((0,+\infty);\clL(H))}$, the integration of the last term over time~$t\in({s_0},+\infty)$ leads  to
\begin{align}
&\int_{s_0}^{+\infty}\left(\int_{s_0}^t\rme^{-\lambda(t-s)}\norm{E y(s)}{}\,\rmd s\right)^2\rmd t\notag\\
\le&\int_{s_0}^{+\infty}\left(\int_0^t\rme^{-\lambda(t-s)}\,\rmd s\right)\left(\int_{s_0}^t\rme^{-\lambda(t-s)}\norm{E y(s)}{}^2\,\rmd s\right)\rmd t\notag\\
\le&\frac{\dnorm{E}{}^2}{\lambda}\int_{s_0}^{+\infty}\rmd t\int_{s_0}^t\rme^{-\lambda(t-s)}\norm{y(s)}{}^2\,\rmd s=\frac{\dnorm{E}{}^2}{\lambda}\int_{s_0}^{+\infty}\norm{y(s)}{}^2\rmd s\int_s^{+\infty}\rme^{-\lambda(t-s)}\,\rmd t,\notag
\end{align}
which, together with~\eqref{2sig-estL2y-1}, implies
\begin{align}\notag
&\int_{s_0}^{+\infty}\norm{y(t)}{}^2\,\rmd t\le 2D^2\left(\frac{1}{2\lambda}\norm{y({s_0})}{}^2+\frac{\dnorm{E}{}^2}{\lambda^2}\int_{s_0}^{+\infty}\norm{y(s)}{}^2\rmd s\right).
\end{align}
By Assumption~\ref{A:cont-Asig}, we  can choose~$\varepsilon>0$ small enough so that~$\norm{\sigma-\varsigma}{}<\varepsilon$ implies~$\dnorm{E}{}^2<(2D^2)^{-1}\lambda^2$. This leads to
\begin{align}\label{2sig-estL2y-2}
&\int_{s_0}^{+\infty}\norm{y(t)}{}^2\,\rmd t\le \frac{1}{1-2D^2\frac{\dnorm{E}{}^2}{\lambda^2}}\frac{2D^2}{2\lambda}\norm{y({s_0})}{}^2= \frac{\lambda D^2}{\lambda^2-2D^2\dnorm{E}{}^2}\norm{y({s_0})}{}^2.
\end{align}
The exponential stability of~$\clA_\sigma-BB^\top\Pi_{\varsigma}$ follows by Datko's Theorem~\cite[Thm.~1]{Datko72}.
\end{proof}

Furthermore, following the proof in~\cite[Thm.~1]{Datko72}, we see that the analogue to~\eqref{2sig-estL2y-2} corresponds to~\cite[Equ.~(7)]{Datko72}, this leads us to the analogue of~\cite[Equ.~(16)]{Datko72} as
\begin{align}\label{2sig-estL2y-3}
&\norm{y(t)}{}\le C_0\overline D\mu\norm{y({s_0})}{},\qquad t\ge s_0+\mu^{-2},
\end{align}
for some constants~$C_0$ and~$\mu$ such that~$\overline D\mu<1$, where~$\overline D\coloneqq(\frac{2D^2}{\lambda^2-2D^2\dnorm{E}{}^2})^\frac12$, with~$D$ as in~\eqref{2sig-estL2y-2}. Next, following the proof in~\cite[Lem.~1]{Datko72} we can conclude that we can achieve any exponential decrease as~$\mu^2$, thus  satisfying~$\mu^2<\frac{\lambda^2-2D^2\dnorm{E}{}^2}{\lambda D^2}$. That is, for smaller~$\dnorm{E}{}$ we  can guarantee
 decrease rates closer to~$\frac{\lambda}{D}$, where~$(D,\lambda)$ is the stability  guaranteed by the Riccati feedback input operators (see Assum.~\ref{A:cont-Pisig} and Def.~\ref{D:stable}).

\subsection{On the success of the strategy}\label{sS:rmks-Conj}
 Lemma~\ref{L:stabfeed-pertsig} expresses the fact that, if we manage to find an estimate/update~$\varsigma\in\varSigma$ close enough to the unknown parameter~$\sigma$, then we can use the stored feedback~$ K_\varsigma=-B^\top\Pi_\varsigma$ to construct an input stabilizing control. At this point, we would like to know if the strategy we propose will be able to find such a~$\varsigma$. The short answer is that, in general, it will not. Let us assume, for simplicity, that we solve the auxiliary systems for all~$\varsigma\in\varSigma$. If~$\varSigma$ is $\varepsilon$-dense in~$\fkS$, then there will be parameters in~$\varSigma$, close to~$\sigma$, for which the IO data is close to the true one, but this does not imply that there is no parameter farther from~$\sigma$ with closer  IO data. However, we underline that we do not need a precise estimate of~$\sigma$. The knowledge of the dynamics defined by the operator~$\clA_\sigma$ is sufficient for the strategy to provide us with a stabilizing feedback.

The use of finite-dimensional  IO data proposed in Algorithm~\ref{Alg:Onl-1int} is a natural choice, since such data is at our disposal (assuming that the output~$z=Cy$ is available as the result of sensor measurements), in particular, the data comparison is not numerically expensive and can be done in real time.

We do not have a complete characterization of the systems for which the IO data comparison is enough to guarantee the success of the strategy in providing us with a stabilizing feedback input (as the simulations do suggest). But, let us present examples illustrating some mechanisms of the strategy. In these examples we consider, for simplicity, a finite dimensional system and assume that the updating time-horizon~$\tau$ is small so that
an explicit Euler finite-difference scheme with temporal step~$\tau$ gives us a good approximation of the dynamics of the controlled system (or at least of the inputs and outputs). Then, a smaller temporal step~$\xi=\frac\tau S$, ~$S\in\bbN$ will also give us a good approximation. For simplicity (without loss of generality) let us restrict ourselves to the first updating time interval~$I_1^\tau=(0,\tau)$ (an analogue argument can be used for the translated intervals~$I_j^\tau=(j-1)\tau+I_1^\tau$). Denoting by~$y^j$ the state at time instant~$t=j\xi$, $1\le j\le S$ we would obtain (approximately)
\begin{align}
y_\sigma^j=y_\sigma^{j-1}+\xi\clA_\sigma^K((j-1)\xi) y_\sigma^{j-1}=\left(\Id+\xi\clA_\sigma^K((j-1)\xi)\right) y_\sigma^{j-1},\qquad y_\sigma^0=y_0,
\end{align}
in the time interval~$I_1^\tau$. Thus
\begin{align}\label{Exaut1-fd}
y_\sigma^j&=\Xi^{[j-1]}_\sigma\cdots\Xi^{[1]}_\sigma\Xi^{[0]}_{\sigma}y_0,\quad\mbox{where}\quad\Xi^{[i]}_\sigma\coloneqq\left(\Id+\xi\clA_\sigma^K(i\xi)\right)
\end{align}
 and~$\clA_\sigma^K(t)\coloneqq \clA_\sigma(t)+BK(t)$ is the operator defining the controlled dynamics. In this case the input and output are, at time instants~$t=j\xi$,
given by~$u_\sigma^j=Ky_\sigma^j$ and~$z_\sigma^j=Cy_\sigma^j$.

Assume that~$\sigma$ is the true parameter and that~$\varsigma\in\varSigma^*\subseteq\varSigma$ is a parameter in the training subset, giving us the corresponding inputs~$u_\varsigma^j$ and outputs~$z_\varsigma^j$. For the differences~$\delta_u^j\coloneqq u_\varsigma^j- u_\sigma^j$ and~$\delta_z^j\coloneqq z_\varsigma^j- z_\sigma^j$ we find
\begin{align}\label{diffIO.Ex}
\delta_u^j&=K\delta_\clA^jy_0,\quad
\delta_z^j=C\delta_\clA^jy_0,\quad\mbox{with} \quad \delta_\clA^j\coloneqq \Xi^{[j-1]}_\varsigma\cdots\Xi^{[1]}_\varsigma\Xi^{[0]}_{\varsigma}-\Xi^{[j-1]}_\sigma\cdots\Xi^{[1]}_\sigma\Xi^{[0]}_{\sigma}.
\end{align}
We would like to know whether the smallness of the~$\norm{\delta_u^j}{\bbR}$ and~$\norm{\delta_z^j}{\bbR}$ imply that~$\clA_\varsigma$ is close to~$\clA_\sigma$. We address this question within the following examples.

\begin{example}\label{Ex:aut1}
Let us consider system~\eqref{sys-intro} with matrices
\begin{equation}\notag
\clA_\sigma=\begin{bmatrix}0&\sigma\\0&0\end{bmatrix},\qquad B=\begin{bmatrix}0 &1\end{bmatrix}^\top,\qquad C=\begin{bmatrix}1 &0\end{bmatrix},
\end{equation}
 and uncertain~$\sigma\in[1,2]$. The pair~$(A,B)$ is controllable (accordingly to the Kalman rank condition~\cite[Part~I, Sect.~1.3, Thm.~1.2]{Zabczyk08}, thus also stabilizable~\cite[Part~I, Sect.~2.5, Thm.~2.9]{Zabczyk08}). The free dynamics is unstable for~$\sigma>0$.
Given an input stabilizing operator~$K=\begin{bmatrix}\kappa_{1}&\kappa_{2}\end{bmatrix}$, we find
\begin{equation}\notag
A_\sigma^K=\begin{bmatrix}0&\sigma\\\kappa_1&\kappa_2\end{bmatrix}.
\end{equation}

Now, in order to use~\eqref{Exaut1-fd}, with $\xi=\frac\tau2$, we find that~$\Xi^{[1]}_\sigma=\Xi^{[0]}_\sigma$ and, explicitly,
\begin{align}\notag
\Xi^{[0]}_{\sigma}=\begin{bmatrix}1&\xi\sigma\\ \xi\kappa_1&1+\xi\kappa_2\end{bmatrix},\quad
\Xi^{[1]}_{\sigma}\Xi^{[0]}_{\sigma}=\begin{bmatrix}1+\xi^2\sigma\kappa_1&\xi\sigma(2+\xi\kappa_2)\\ \xi\kappa_1(2+\xi\kappa_2)&\xi^2\sigma\kappa_1+(1+\kappa_2)^2\end{bmatrix}.
\end{align}
Using~\eqref{diffIO.Ex}, we find the output and input differences, at time instants~$t\in\{\frac{\tau}2,\tau\}$,
\begin{align}\notag
&\delta_z^1=(\varsigma-\sigma)\begin{bmatrix}0&\xi\end{bmatrix}y_0 &&\mbox{and} \qquad \delta_z^2=(\varsigma-\sigma)\begin{bmatrix}\xi^2\kappa_1&\xi(2+\xi\kappa_2)\end{bmatrix}y_0. \notag\\
&\delta_u^1=(\varsigma-\sigma)\begin{bmatrix}0&\xi\kappa_1\end{bmatrix}y_0 &&\mbox{and} \qquad \delta_u^2=(\varsigma-\sigma)\begin{bmatrix}\xi^2\kappa_1^2&\xi(2+\xi\kappa_2))\kappa_1\end{bmatrix}y_0. \notag
\end{align}

In case~$y_0=0$, we will have that the IO data will vanish, and so every feedback input will keep the state at rest. Thus, we consider only the nontrivial case~$y_0\eqqcolon(y_{01},y_{02})\ne(0,0)$. If~$y_{02}\ne0$ then~$
\norm{\delta_z^1}{\bbR}$ is small only if $\norm{\varsigma-\sigma}{\bbR}$ is. Since~$K$ is stabilizing, we necessarily have that~$\kappa_1<0$, thus we must have that~$\norm{\varsigma-\sigma}{\bbR}$ is small. Thus, in this case~$\norm{\delta_u^1}{\bbR}$ is small only if $\norm{\varsigma-\sigma}{\bbR}$ is.
 If~$y_{02}=0$, we cannot draw conclusions from~$\norm{\delta_u^1}{\bbR}$ or~$\norm{\delta_z^1}{\bbR}$, but then~$\norm{\delta_z^2}{\bbR}$ and~$\norm{\delta_u^2}{\bbR}$ are both small only if~$\norm{\varsigma-\sigma}{\bbR}$ is small. In summary we have $\sum_{i=1}^2 \norm{\delta_z^i}{\bbR}^2 + \sum_{i=1}^2 \norm{\delta_u^i}{\bbR}^2$ is small only if~$\norm{\varsigma-\sigma}{\bbR}$ is small. Hence, we can expect the proposed strategy to provide us with updates close to the true parameter~$\sigma$.
\end{example}

\begin{example}\label{Ex:aut2}
Next, we consider system~\eqref{sys-intro} with matrices
\begin{equation}\notag
\clA_\sigma=\begin{bmatrix}0&1\\-1&\sigma\end{bmatrix},\qquad B=\begin{bmatrix}0 &1\end{bmatrix}^\top,\qquad C=\begin{bmatrix}1 &0\end{bmatrix},
\end{equation}
 and uncertain~$\sigma\in[0,1]$.  The pair~$(A,B)$ is controllable. The free dynamics is unstable for~$\sigma>0$. Using again~\eqref{Exaut1-fd}, with $\xi=\frac\tau2$, for a stabilizing feedback~$K=\begin{bmatrix}\kappa_{1}&\kappa_{2}\end{bmatrix}$, we find~$\Xi^{[1]}_\sigma=\Xi^{[0]}_\sigma$ and
\begin{align}\notag
\Xi^{[0]}_\sigma&=\begin{bmatrix}1&\xi\\\xi(\kappa_1-1)&1+\xi(\kappa_2+\sigma)\end{bmatrix},\notag\\
\Xi^{[1]}_\sigma\Xi^{[0]}_\sigma&=\begin{bmatrix}1+\xi^2(\kappa_1-1)&\xi(2+\xi(\kappa_2+\sigma))\\\xi(\kappa_1-1)(2+\xi(\kappa_2+\sigma))&\xi^2(\kappa_1-1)+(1+\xi(\kappa_2+\sigma))^2
\end{bmatrix}.\notag
\end{align}
Using~\eqref{diffIO.Ex}, we find the input and output differences, at time instants~$t\in\{\frac{\tau}2,\tau\}$,
\begin{align}\notag
\delta_u^1&=(\varsigma-\sigma)\begin{bmatrix}0&\xi\kappa_2\end{bmatrix}y_0,\notag\\ \delta_u^2&=\begin{bmatrix}(\varsigma-\sigma)\xi^2(\kappa_1-1)\kappa_2&(\varsigma-\sigma)\xi^2\kappa_1+((1+\xi(\kappa_2+\varsigma))^2-(1+\xi(\kappa_2+\sigma))^2)\kappa_2\end{bmatrix}y_0\notag\\
&=(\varsigma-\sigma)\begin{bmatrix}\xi^2(\kappa_1-1)\kappa_2&\xi^2\kappa_1+2\xi \kappa_2(1+\kappa_2\xi)+\xi^2\kappa_2(\varsigma+\sigma)\end{bmatrix}y_0.\notag\\
\delta_z^2&=(\varsigma-\sigma)\begin{bmatrix}0&\xi^2\end{bmatrix}y_0,\notag
\end{align}
Here, $\delta_z^1=0$ does not depend on the parameter $\sigma$.
Again, let us consider the nontrivial case~$(y_{01},y_{02})\ne(0,0)$. Since~$K$ is stabilizing, we necessarily have that~$\kappa_1-1<0$ and~$\kappa_2+\sigma<0$, in particular,~$\kappa_2<-\sigma\le0$.

 If~$y_{02}\ne0$, a small~$\norm{\delta_u^1}{\bbR}$ implies that~$\norm{\varsigma-\sigma}{\bbR}$ is also small. It follows that~$\norm{\delta_z^1}{\bbR}^2 + \norm{\delta_u^1}{\bbR}^2$ is small only if~$\norm{\varsigma-\sigma}{\bbR}$ is small. If~$y_{02}=0$, we see that~$\norm{\delta_u^1}{\bbR}=\norm{\delta_z^1}{\bbR}=\norm{\delta_z^2}{\bbR}=0$, and that~$\sum_{i=1}^2 \norm{\delta_z^i}{\bbR}^2 + \norm{\delta_u^i}{\bbR}^2$ is small only if~$\norm{\delta_u^2}{\bbR}^2$  is small, that is, only if~$\norm{\varsigma-\sigma}{\bbR}$ is small. Again,  we can expect the strategy to provide us with updates close to the true parameter~$\sigma$ (as confirmed, in section~\ref{sS:num-osc}, by results of simulations).
\end{example}

\begin{example}\label{Ex:aut3}
We consider system~\eqref{sys-intro} with matrices
\begin{equation}\notag
\clA_\sigma=\begin{bmatrix}1&1\\0&\sigma\end{bmatrix},\qquad B=\begin{bmatrix}1&0\end{bmatrix}^\top,\qquad C=\begin{bmatrix}1 &0\end{bmatrix},
\end{equation}
 and uncertain~$\sigma\in[-2,-1]$.  The pair~$(A,B)$ is controllable. The free dynamics is unstable. For a stabilizing feedback~$K=\begin{bmatrix}\kappa_{1}&\kappa_{2}\end{bmatrix}$, we find~$\Xi^{[j-1]}=\Xi^{[0]}_\sigma$, $j\in\bbN_+$, and
\begin{align}\notag
\Xi^{[j-1]}_\sigma\cdots\Xi^{[1]}_\sigma\Xi^{[0]}_\sigma=(\Xi^{[0]}_\sigma)^j=\begin{bmatrix}(1+\xi (1+\kappa_1))^j&  \frac{(1+\xi (1+\kappa_1))^j-(1+\xi \sigma)^j}{\xi (1+\kappa_1)-\xi\sigma}(1+\xi \kappa_2)\\0&(1+\xi \sigma)^j
\end{bmatrix}.\notag
\end{align}
and
\begin{align}\notag
\delta_\clA^j=(\Xi^{[0]}_\varsigma)^j-(\Xi^{[0]}_\sigma)^j =\begin{bmatrix}0& \left( \frac{(1+\xi (1+\kappa_1))^j-(1+\xi \varsigma)^j}{\xi (1+\kappa_1)-\xi\varsigma} - \frac{(1+\xi (1+\kappa_1))^j-(1+\xi \sigma)^j}{\xi (1+\kappa_1)-\xi\sigma} \right)(1+\xi \kappa_2)\\0&(1+\xi \varsigma)^j-(1+\xi\sigma)^j
\end{bmatrix}.\notag
\end{align}
We can see that if the initial state satisfies~$y_{02}=0$, then~$\delta_u^j=\delta_z^j=0$, thus we obtain no information on~$\varsigma-\sigma$, in fact, the auxiliary fictitious IO data coincides with the true one. In particular, every parameter will correspond to a minimizer of the IO data difference. The strategy will still give us an update value, which is not necessarily close to~$\sigma$. However, note that the provided input will still be stabilizing. Indeed, the solution~$y=(y_1,y_{2})$ will satisfy~$y_2(t)=\rme^{\sigma t}y_{02}$ independently of~$K$. Then~$y_1(t)=\rme^{(1+\kappa_1)t}y_{01}+\int_0^t\rme^{(1+\kappa_1)(t-r)}(1+\kappa_2)\rme^{\sigma r}y_{02}\,\rmd r$. For a stabilizing feedback~$K$ we necessarily have~$1+\kappa_1<0$, hence~$y(t)$ will  converge exponentially to zero with rate~$\mu=\min\{-(1+\kappa_1),-\sigma\}>0$ (cf.~\cite[Prop.~3.2]{AzmiRod20}).

Note that, this example has the particularity that the parameter~$\sigma<0$ does not contribute as a source of instability, but rather as a source of stability, this is why its knowledge is not essential for the design of stabilizing feedback control inputs (however, note that we cannot obtain an  exponential decrease rate~$\mu$ larger than~$-\sigma$).
\end{example}

\begin{example}\label{Ex:per1}
Finally, we consider system~\eqref{sys-intro} with
\begin{align}
\clA_{\sigma}(t)=\Psi(\rho^{-1}t +\phi)\begin{bmatrix}0 &1\\1&0\end{bmatrix},\qquad B=\begin{bmatrix}0 &1\end{bmatrix}^\top,\qquad C=\begin{bmatrix}1 &0\end{bmatrix},
\end{align}
where~$\Psi(t)=\Psi(t+1)\in\bbR$ is a time-$1$ periodic function with isolated zeros. The pair~$(\clA_{\sigma},B)$ is controllable (accordingly to the nonautonomous Kalman rank condition~\cite[Thm.~3]{SilvermanMeadows67}).
The free dynamics is not asymptotically stable if~$\vartheta\coloneqq\int_0^\rho\Psi(\rho^{-1}t +\phi)\,\rmd t\ge0$. Indeed, the solution~$y=(y_1,y_2)$ of~\eqref{sys-intro} with $u=0$, issued from the initial state~$y(0)=(1,1)$, satisfies
\begin{equation}\notag
y_1(t)=y_2(t)=\rme^{\int_0^t\Psi(\rho^{-1}s +\phi)\,\rmd s}
\end{equation}
and for~$t=j\rho$, $j\in\bbN$, we find
\begin{equation}\notag
y_1(j\rho)=y_2(j\rho)=\rme^{\int_0^{j\rho}\Psi(\rho^{-1}s +\phi)\,\rmd s}=\rme^{j\vartheta}.
\end{equation}
In particular, we will have that~$y_1(t)$ converges to zero as ~$t\to+\infty$ only if~$\vartheta<0$.

Let us  compute
\begin{align}\notag
\Xi_\sigma^{[0]}&=\begin{bmatrix}1 &\xi\Psi(\phi)\\\xi(\Psi(\phi)+\kappa_1(0))&1+\xi\kappa_2(0)\end{bmatrix},\notag\\
\Xi_\sigma^{[1]}&=\begin{bmatrix}1 &\xi\Psi(\rho^{-1}\xi+\phi)\\\xi(\Psi(\rho^{-1}\xi+\phi)+\kappa_1(\xi))&1+\xi\kappa_2(\xi)\end{bmatrix}.\notag
\end{align}
At time~$t=\xi$, for another parameter~$\varsigma=(\rho_*,\phi_*)$ we find the output and input
differences
\begin{align}\notag
\delta_z^1&= C(\Xi_\varsigma^{[0]}-\Xi_\sigma^{[0]})y_{0},\qquad
 \delta_u^1=K(\Xi_\varsigma^{[0]}-\Xi_\sigma^{[0]})y_{0}.
 \end{align}
We see that in case~$\Psi(\phi_*)=\Psi(\phi)$, then~$\Xi_\varsigma^{[0]}=\Xi_\sigma^{[0]}$ and~$\delta_z^1=0=\delta_u^1$. Note that~$\Psi(\phi_*)=\Psi(\phi)$ does not necessarily imply that~$\phi_*$ is close to~$\phi$. Further, in this case we also find
\begin{align}\notag
&\Xi_\varsigma^{[1]}\Xi_\varsigma^{[0]}-\Xi_\sigma^{[1]}\Xi_\sigma^{[0]}=(\Xi_\varsigma^{[1]}-\Xi_\sigma^{[1]})\Xi_\sigma^{[0]}\notag\\
&\hspace{2em}=\begin{bmatrix}0&\xi(\Psi(\rho_*^{-1}\xi+\phi_*)-\Psi(\rho^{-1}\xi+\phi))\\\xi(\Psi(\rho_*^{-1}\xi+\phi_*)-\Psi(\rho^{-1}\xi+\phi))&0
\end{bmatrix}\Xi_\sigma^{[0]}.\notag
\end{align}
At time~$t=2\xi$,  we find the output and input differences
\begin{align}\notag
\delta_z^2&=\xi\delta_\Psi\begin{bmatrix}0&1\end{bmatrix}\Xi_\sigma^{[0]}y_0,\qquad
\delta_u^2=\xi\delta_\Psi\begin{bmatrix}\kappa_2(2\xi)& \kappa_1(2\xi)\end{bmatrix}\Xi_\sigma^{[0]}y_0
\end{align}
with~$\delta_\Psi\coloneqq\Psi(\rho_*^{-1}\xi+\phi_*)-\Psi(\rho^{-1}\xi+\phi)$. Again if~$\delta_\Psi=0$ we will have~$\delta_z^2=0=\delta_u^2$, but not necessarily that~$\rho^{-1}\xi+\phi$ is close to~$\rho_*^{-1}\xi+\phi_*$. Note that if~$\Psi(\phi_*)=\Psi(\phi)$ and~$\xi$ is small we may have that~$\delta_\Psi\ne0$, but still rather small, which lead to small~$\delta_z^2$ and~$\delta_u^2$.

In conclusion, we can guess that, for such uncertainties there may exist parameters far from the true one corresponding to close IO data.
So, we can hardly expect that the strategy will provide us with the parameter closest to the true one (at least, if the true one is not in the training set). Note, however, that to have~$\delta_z^k=0=\delta_u^k$ at time~$t=k\xi$, $0\le k\le S-1$, we  need the conditions~$\Psi(\rho_*^{-1}k\xi+\phi_*)=\Psi(\rho^{-1}k\xi+\phi)$. Then for~$S$ large enough and~$\xi=\frac{\tau}S$, we can expect the  IO data comparison to provide us with a parameter for which the functions~$t\mapsto\Psi(\rho_*^{-1}t+\phi_*)$ and~$t\mapsto\Psi(\rho^{-1}t+\phi)$ are close to each other, for~$t\in I_1^\tau= (0,\tau)$ and, consequently,  an approximation of the matrix~$\clA_\sigma$ defining the (free) dynamics can be found.
In section~\ref{sS:num-per} we shall present simulations, concerning this example.

 Finally, recall that, though the pair~$(\clA_{\sigma},B)$ is controllable in nonempty open time intervals~\cite[Thm.~3]{SilvermanMeadows67}, we may loose stabilizability if we freeze time, namely, the pair~$(\clA_{\sigma}(s),B)$ is not stabilizable if~$\Psi(\rho^{-1}s +\phi)=0$; further,~$(\clA_{\sigma}(s),C)$ is not detectable, though~$(\clA_{\sigma},B)$ is observable in nonempty open time intervals~\cite[Thm.~5]{SilvermanMeadows67}.

\end{example}
\begin{remark}\label{R:rmks-gen}
After spatial discretization, the above arguments can be used in examples involving infinite-dimensional systems. In  section~\ref{sS:num-PDE} we present simulations involving a parabolic dynamics with an uncertainty in the convection term.
\end{remark}

\section{Numerical results}\label{S:numerics}
We present results of numerical experiments showing the viability of the approach. By comparing the true IO data to the fictitious one, we will be able to obtain a piecewise constant parameter update leading  to a  stabilizing input control, based on (time-periodic) Riccati feedbacks stored in a library.
The simulations have been run with Matlab.

\subsection{Finite-dimensional autonomous oscillator}\label{sS:num-osc}
Here we consider~\eqref{sys-intro}, with
\begin{equation}\label{Aex-osc}
\clA_\sigma=\begin{bmatrix}0& 1\\-1&\sigma\end{bmatrix},
\end{equation}
and~$(B,C)$ as in~\eqref{BCQ-ode-per}. In the autonomous case the Riccati equations~\eqref{Ricc-sig-feed} are in fact algebraic, $\dot\Pi_\sigma=0$, which we have solved with~$Q=C$.  The uncertain parameter~$\sigma\in\bbR$ corresponds to a damping (with~$\sigma<0$) or energizing (with~$\sigma>0$) parameter. The free dynamics is exponentially stable in the case~$\sigma<0$ and exponentially unstable in the  case~$\sigma>0$. In case~$\sigma=0$ the norm of the state is preserved.

We assume that~$\sigma\in[-1,1]$ and consider the training set
\begin{align}
\varSigma&=\varSigma_{N}\coloneqq\left\{\!\left.i_1\tfrac1{N_1}\,\right| -{N_1}\le i_1\le{N_1}\right\}\subset[-1,1],\qquad N\coloneqq\#\varSigma_N=2N_1+1.\label{Sigma.exp.osc}
\end{align}
with integer~$N_1\ge1$. We shall consider the cases~$N\in\{11,21\}$.

Let the true parameter~$\sigma$ and our initial guess~$\sigma^\tte(0)$ be as
\begin{equation}\label{sigsige-num-osc}
\sigma=0.95\quad\mbox{and}\quad\sigma^\tte(0)=0.
\end{equation}
Note that~$\sigma\notin\varSigma_{21}\supset\varSigma_{11}$, with~$\varSigma_{N}$ as in~\eqref{Sigma.exp.osc}.

 Following the strategy, in each time interval~$\clI_j^\tau=((j-1)\tau,j\tau)$, as subset of training parameters we shall take~$\varSigma^*=\varSigma_\rml\bigcup\varSigma_\rmg$, with local parameters~$\varSigma_\rml\subseteq\varSigma$ in a ball of radius~$\sqrt\gamma$, centered at our latest update and with~$N_\rmg=\#\varSigma_\rmg\subset\varSigma$ randomly chosen equally likely parameters generated using the Matlab function \texttt{rand}; see Algorithm~\ref{Alg:Onl-concat}. The parameter has been updated at time instants~$j\tau$ with~$\tau=0.5$, $j\in\bbN$.

Recall that the free dynamics is unstable for the true parameter~$\sigma=0.95$.
\begin{figure}[ht]
\centering
\subfigure
{\includegraphics[width=0.45\textwidth,height=0.375\textwidth]{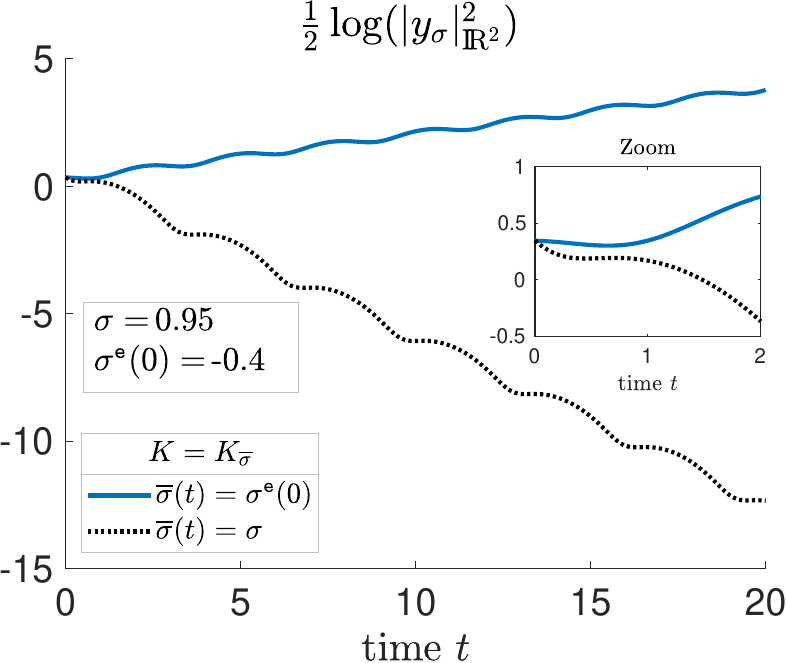}}
\qquad
\caption{$\clA_\sigma$ as in~\eqref{Aex-osc}. No parameter update, $(N_\rmg,\gamma)=(0,0)$.}
\label{fig.osc_noup}
\end{figure}
 In Fig.~\ref{fig.osc_noup} we see that our initial guess is not good enough in order to obtain a stabilizing input (here we have simply run our algorithm with~$(N_\rmg,\gamma)=(0,0)$, to avoid the parameter update). Hence, we need to update our guess/parameter.
\begin{figure}[ht]
\centering
\subfigure
{\includegraphics[width=0.45\textwidth,height=0.375\textwidth]{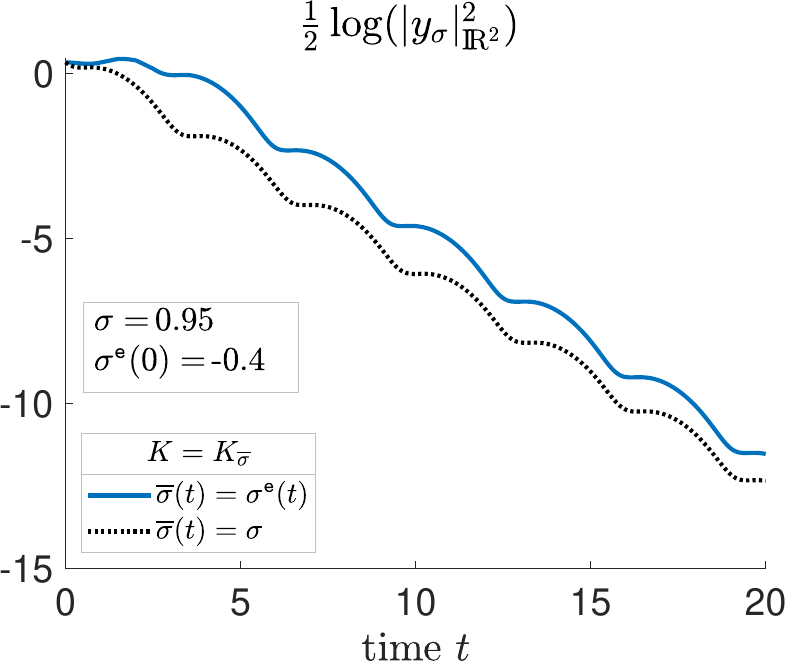}}
\qquad
{\includegraphics[width=0.45\textwidth,height=0.375\textwidth]{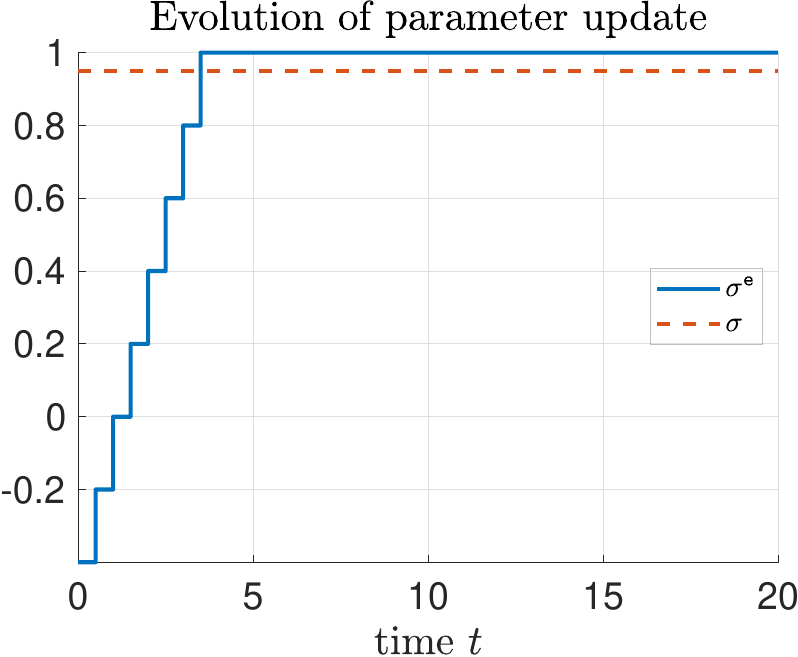}}
\caption{$\clA_\sigma$ as in~\eqref{Aex-osc}.  $\varSigma=\varSigma_{11}$ as in~\eqref{Sigma.exp.osc}, $(N_\rmg,\gamma)=(0,0.1)$.}
\label{fig.osc_train11}
\end{figure}
For that we followed the proposed strategy and present the results in Fig.~\ref{fig.osc_train11}, where we see that the constructed input is stabilizing, where we have taken only local parameters,~$\varSigma^*=\varSigma_\rml$, with~$\gamma=0.1$. Furthermore,  the update reaches and remains the closest to the true parameter after some time.
\begin{figure}[ht]
\centering
\subfigure
{\includegraphics[width=0.45\textwidth,height=0.375\textwidth]{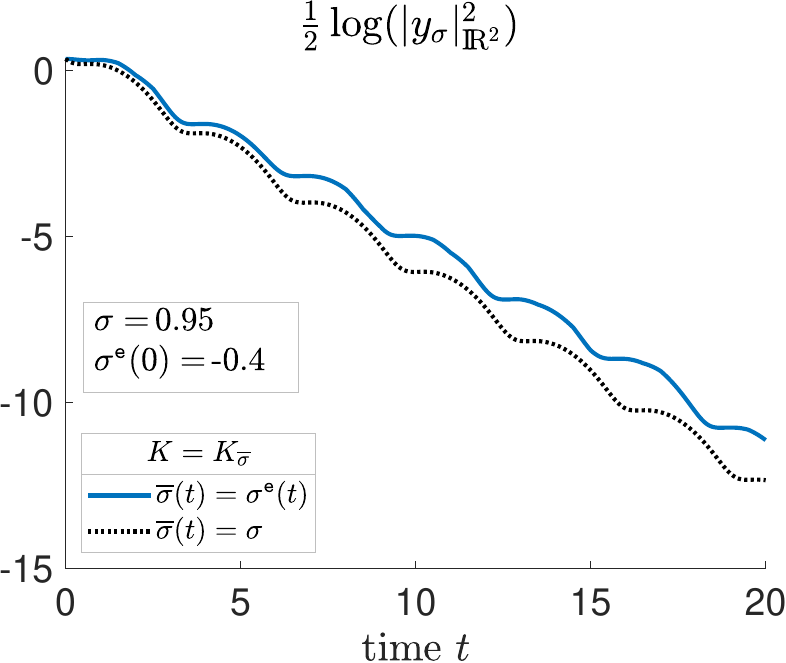}}
\qquad
{\includegraphics[width=0.45\textwidth,height=0.375\textwidth]{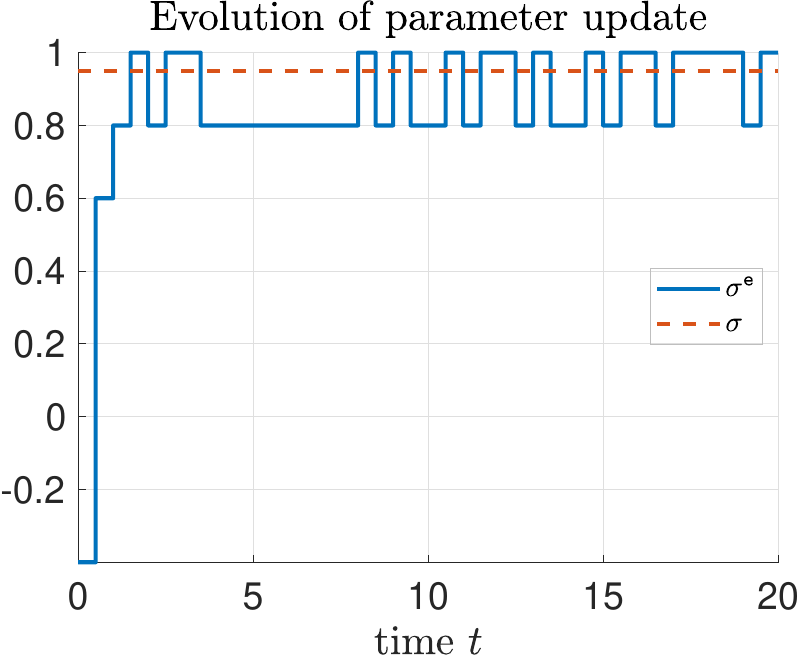}}
\caption{$\clA_\sigma$ as in~\eqref{Aex-osc}.  $\varSigma=\varSigma_{11}$ as in~\eqref{Sigma.exp.osc}, $(N_\rmg,\gamma)=(10,0.1)$.}
\label{fig.osc_train11rand}
\end{figure}
Next, we add $N_\rmg=\#\varSigma_{\rmg}=10$ random global parameters to the training set~$\varSigma^*=\varSigma_\rml\bigcup\varSigma_\rmg$ (re-generated at each time~$t=j\tau$, $j\in\bbN$) and present the results in Fig.~\ref{fig.osc_train11rand}. Again, we obtain a stabilizing input.  Note that, in this case we may have random parameters closer to~$\sigma$, and thus get a closer  update. This can be seen in Fig.~\ref{fig.osc_train11rand} where we reach faster parameters which are close to the true~$\sigma$. Again, the estimate remains close to~$\sigma$ (switching between the closest parameters, in~$\varSigma$, below and above~$\sigma$).

Finally,  in Fig.~\ref{fig.osc_train21} we see the results for the refined training set~$\Sigma_{21}$, where we took only local parameters (as in Fig.~\ref{fig.osc_train11}). Again we obtain a stabilizing input. Furthermore, since we have more parameters in the training subsets~$\varSigma^*=\varSigma_\rml$, we see that  the update~$\sigma^\tte(t)$ moves faster to a neighborhood of~$\sigma$, compared to  Fig.~\ref{fig.osc_train11}.
\begin{figure}[ht]
\centering
\subfigure
{\includegraphics[width=0.45\textwidth,height=0.375\textwidth]{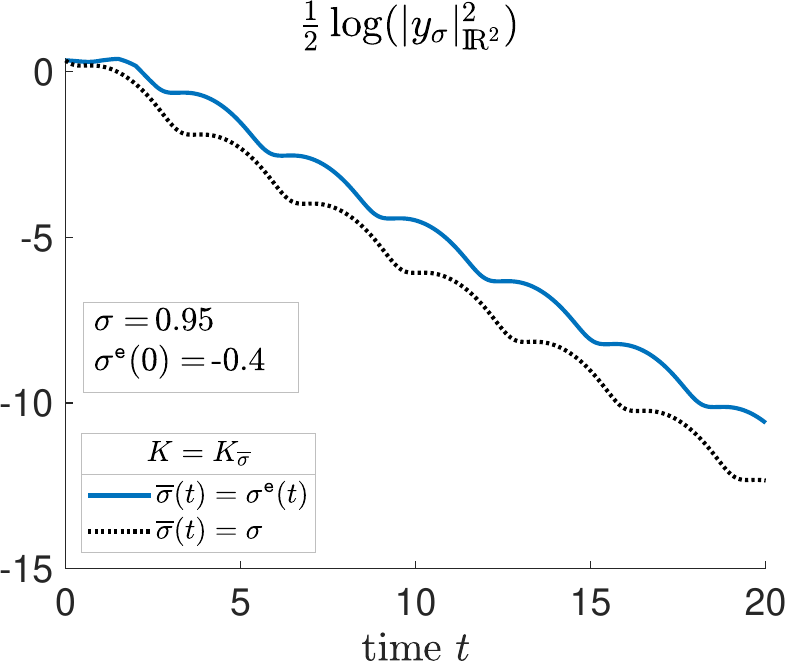}}
\qquad
{\includegraphics[width=0.45\textwidth,height=0.375\textwidth]{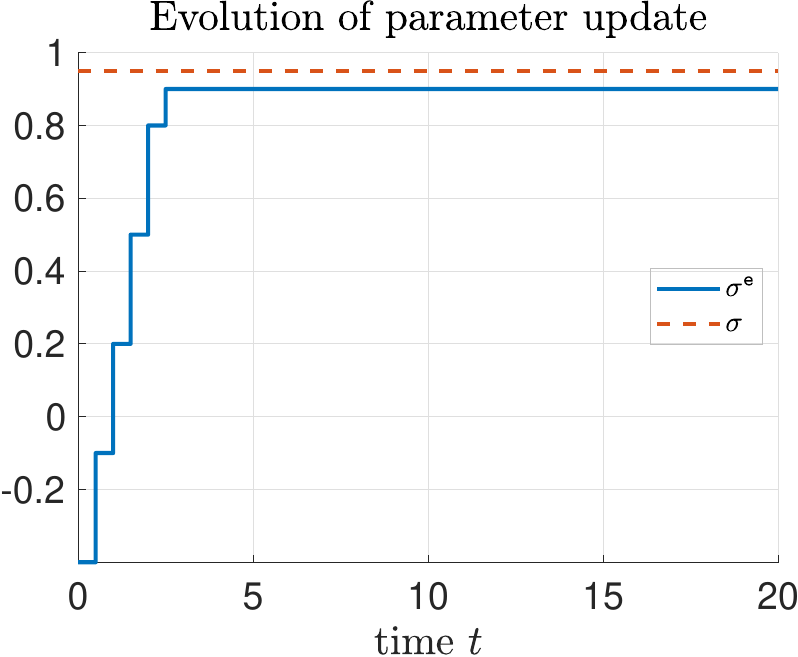}}
\caption{$\clA_\sigma$ as in~\eqref{Aex-osc}.  $\varSigma=\varSigma_{21}$ as in~\eqref{Sigma.exp.osc}, $(N_\rmg,\gamma)=(0,0.1)$.}
\label{fig.osc_train21}
\end{figure}

\medskip
The algebraic Riccati equations were solved using the software in~\cite{BennerRicSolver}; see~\cite{Benner06}.
We solved the state equations with a Crank--Nicolson--Adams--Bashforth  scheme  with temporal step~$t^{\rm step}=10^{-3}$.

\subsection{Finite-dimensional time-periodic dynamics}\label{sS:num-per}
We consider a time-$\rho$ periodic dynamics defined by the matrix
\begin{equation}\label{Aex-tpd}
\clA_{\sigma}(t)=\clA_{(\rho,\phi)}(t)=\Psi(\rho^{-1}t +\phi)\begin{bmatrix}0& 1\\1&0\end{bmatrix},\quad\mbox{with}\quad \Psi(s)\coloneqq1+6\sin(2\pi s),
\end{equation}
and with an uncertain parameter~$\sigma\coloneqq(\rho,\phi)\in\bbR_+\times[0,1)\subset\bbR^2$,
consisting of the time-period~$\rho$ and the time-phase~$\phi$. Note that, given~$\sigma=(\rho,\phi)$, the matrix~$\clA_\sigma$ satisfies
\begin{equation}\notag
\clA_{(\rho,\phi)}(t)=\clA_{(\rho,0)}(t+\rho\phi).
\end{equation}
In this case we can see that if~$\Pi_{(\rho,\phi)}(t)$ stands for the periodic Riccati solutions corresponding to a given uncertain pair~$\sigma=(\rho,\phi)$, then
\begin{equation}\label{Ricc_per_unc}
\Pi_{(\rho,\phi)}(t)=\Pi_{(\rho,0)}(t+\rho\phi).
\end{equation}
Indeed,  since~$\Pi_{(\rho,0)}$ solves~\eqref{Ricc-sig-feed} for the case~$\phi=0$, for~$\Pi^0(t)\coloneqq\Pi_{(\rho,0)}(t+\rho\phi)$, we find
 \begin{align}%
 \dot{\Pi}^0&=\dot\Pi_{(\rho,0)}(t+\rho\phi)\notag\\
 &=-\clA_{(\rho,0)}^*(t+\rho\phi){\Pi}^0(t)-{\Pi}^0(t) \clA_{(\rho,0)}^*(t+\rho\phi)+{\Pi}^0(t)BB^*{\Pi}^0(t)-Q^* Q\notag\\
 &=-\clA_{(\rho,\phi)}^*(t){\Pi}^0(t)-{\Pi}^0(t) \clA_{(\rho,\phi)}^*(t)+{\Pi}^0(t)BB^*{\Pi}^0(t)-Q^* Q.\notag
 \end{align}
Hence, ~$\Pi^0$ solves~\eqref{Ricc-sig-feed} for the given parameter~$\sigma=(\rho,\phi)$. Since~$\Pi_{(\rho,\phi)}(t)$ solves the same equation, by the uniqueness of the solution (due to the assumed stabilizability and detectability in~\eqref{assum-StabDete}) it follows that~\eqref{Ricc_per_unc} holds true.

In other words, the  optimal dynamics is, for a fixed~$(\rho,\phi)$, given by
\begin{equation}\label{sys_per_sigrsige}
\dot y=\clA_{(\rho,\phi)} y-BB^\top\Pi_{(\rho,0)}(t+\rho\phi)y.
\end{equation}
Consequently we need to construct and store the feedback for parameters as~$\sigma=(\rho,0)$. Let us assume that we know that the time period satisfies~$\rho\in[0.5,1.5]$. Then we take the training set as
\begin{subequations}\label{Sigma.exp.per}
\begin{align}
\varSigma&=\varSigma_{N}\subset[0.5,1.5]\times[0,1),\qquad N\coloneqq\#\varSigma_N=(2N_1+1)\times N_2;
\intertext{with fixed positive integers~$N_1$ and~$N_2$, and}
\varSigma_{N}&\coloneqq\left\{\!\left.\left(1+i_1\tfrac1{2N_1}, i_2\tfrac1{\overline N_2}\right)\,\right| -{N_1}\le i_1\le{N_1}, 0\le i_2\le {N_2-1}\right\}.
\end{align}
\end{subequations}
We present the results of experiments for the cases~$N\coloneqq(N_1,N_2)\in\{(10,30), (20,60)\}$.

The control, output, and Riccati observation operators were taken as
\begin{equation}\label{BCQ-ode-per}
B=\begin{bmatrix}0 \\ 1\end{bmatrix},\quad C=\begin{bmatrix}1 & 0\end{bmatrix},\quad\mbox{and}\quad Q=\begin{bmatrix}1&0\\0 & 1\end{bmatrix}.
\end{equation}

The true parameter~$\sigma$ and our initial guess~$\sigma^\tte(0)$ are taken as,
\begin{equation}\label{sigsige-num}
\sigma=(\rho,\phi)=(1.47,0.51)\quad\mbox{and}\quad\sigma^\tte(0)=(\rho^\tte(0),\phi^\tte(0))=(1,0).
\end{equation}

\subsubsection{Instability of free dynamics and need of a parameter update} Fig.~\ref{fig.per_free}
\begin{figure}[ht]
\centering
\subfigure[Free dynamics.\label{fig.per_free}]
{\includegraphics[width=0.45\textwidth,height=0.375\textwidth]{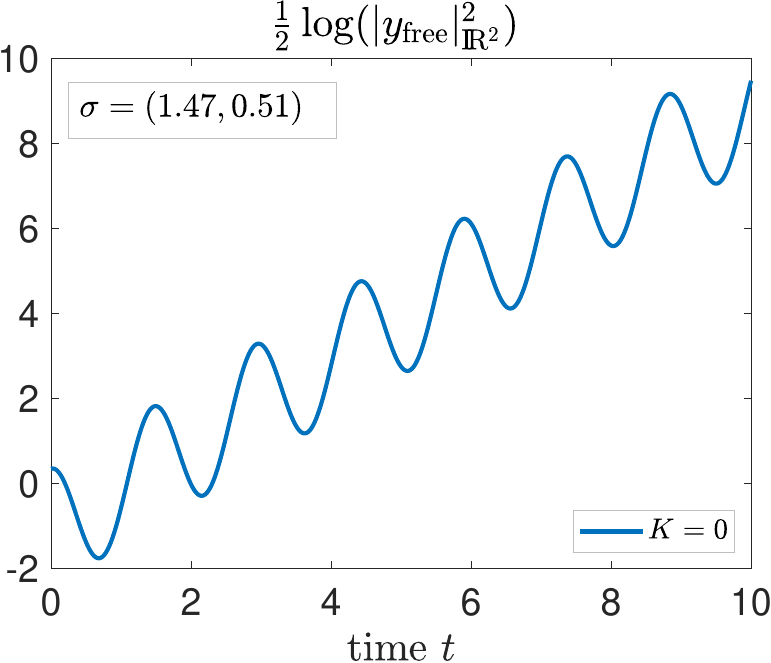}}
\qquad
\subfigure[No parameter update, $(N_\rmg,\gamma)=(0,0)$.\label{fig.per_noup}]
{\includegraphics[width=0.45\textwidth,height=0.375\textwidth]{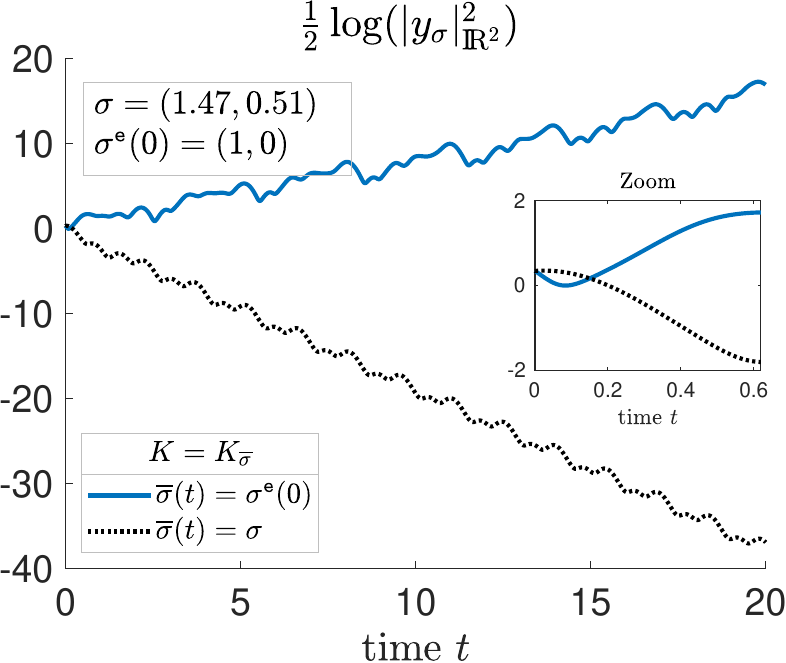}}
\caption{$\clA_\sigma$ as in~\eqref{Aex-tpd}. Instability with no control and with no feedback control update.}
\label{fig.per_free_and_noup}
\end{figure}
shows that the free dynamics is exponentially unstable, thus a control is needed to drive the system asymptotically to zero. Fig.~\ref{fig.per_noup} shows that the feedback input operator~$K_{\sigma^\tte(0)}$, corresponding to our initial guess~$\sigma^\tte(0)\in\varSigma$, does not provide us with a stabilizing input, thus an update of our initial guess is needed.

\subsubsection{Performance following the parameter update}
We present the results, for the input obtained with the proposed parameter updating strategy. We have chosen the update finite time-horizon as~$\tau=0.1$.
In each time interval~$\clI_j^\tau=((j-1)\tau,j\tau)$, as subset of training parameters we have taken~$\varSigma^*=\varSigma_\rml\bigcup\varSigma_\rmg$, with the local parameters~$\varSigma_\rml\subseteq\varSigma$ in a ball of radius~$\gamma=0.02$ centered at our latest update and with  no random global parameters,~$N_\rmg=\#\varSigma_\rmg=0$; see Algorithm~\ref{Alg:Onl-concat}.
\begin{figure}[ht]
\centering
\subfigure
{\includegraphics[width=0.45\textwidth,height=0.375\textwidth]{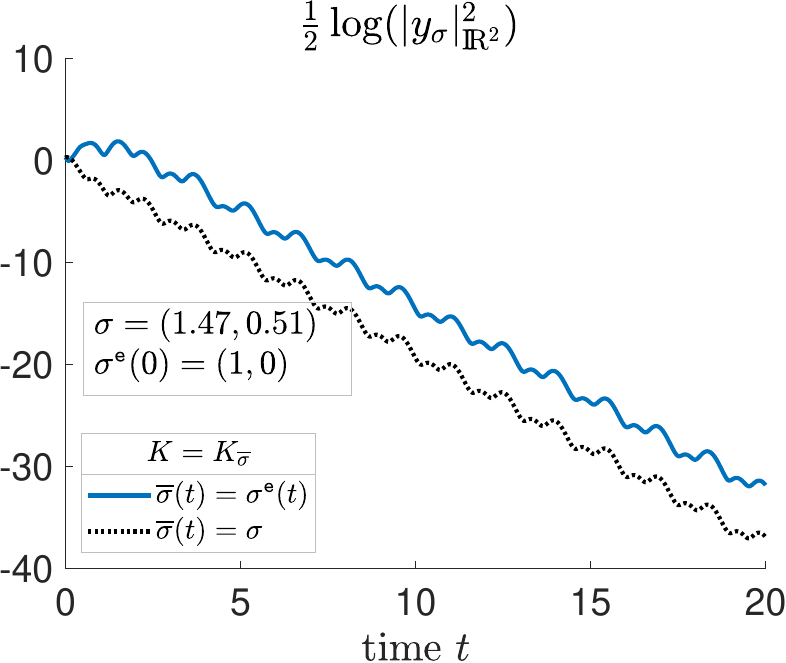}}
\qquad
\subfigure
{\includegraphics[width=0.45\textwidth,height=0.375\textwidth]{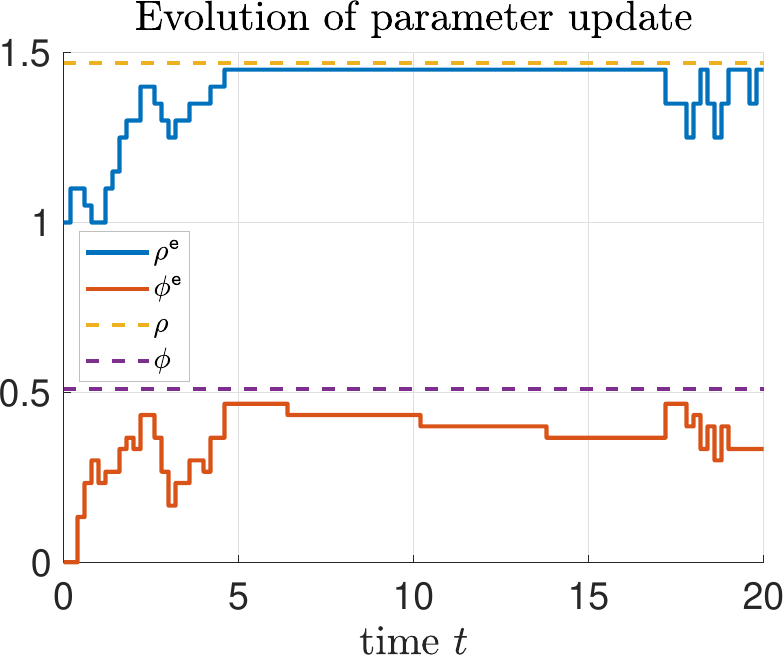}}
\caption{$\clA_\sigma$ as in~\eqref{Aex-tpd}. $\sigma\notin\varSigma=\varSigma_{(10,30)}$ as in~\eqref{Sigma.exp.per}, $(N_\rmg,\gamma)=(0,0.02)$.}
\label{fig.per_2130g0}
\end{figure}
In Fig.~\ref{fig.per_2130g0}
we see that the proposed strategy leads to a stabilizing control input. Further, the stabilization rate is (with the naked eye) the same as the one we would obtain in case we knew~$\sigma$ and used the corresponding Riccati feedback~$K_\sigma$.
\begin{figure}[ht]
\centering
\subfigure
{\includegraphics[width=0.45\textwidth,height=0.375\textwidth]{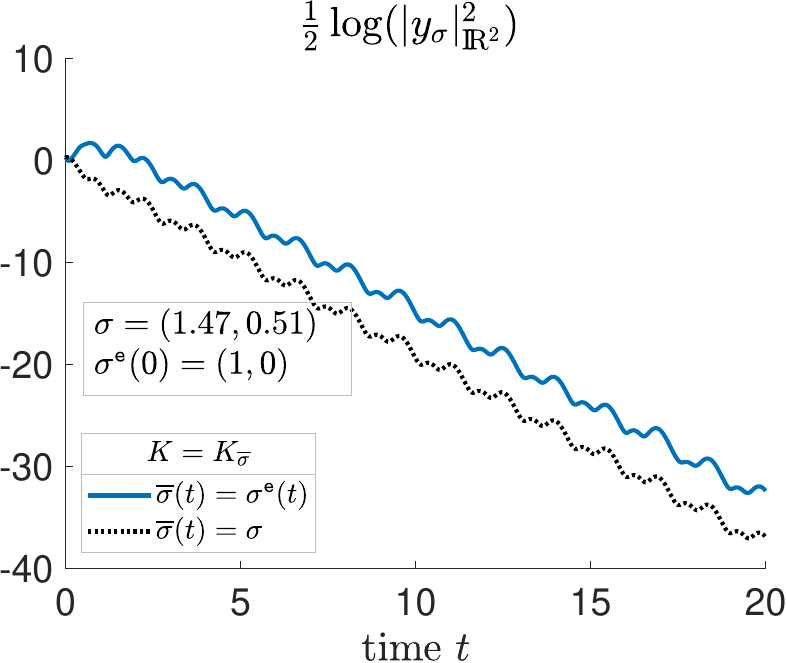}}
\qquad
\subfigure
{\includegraphics[width=0.45\textwidth,height=0.375\textwidth]{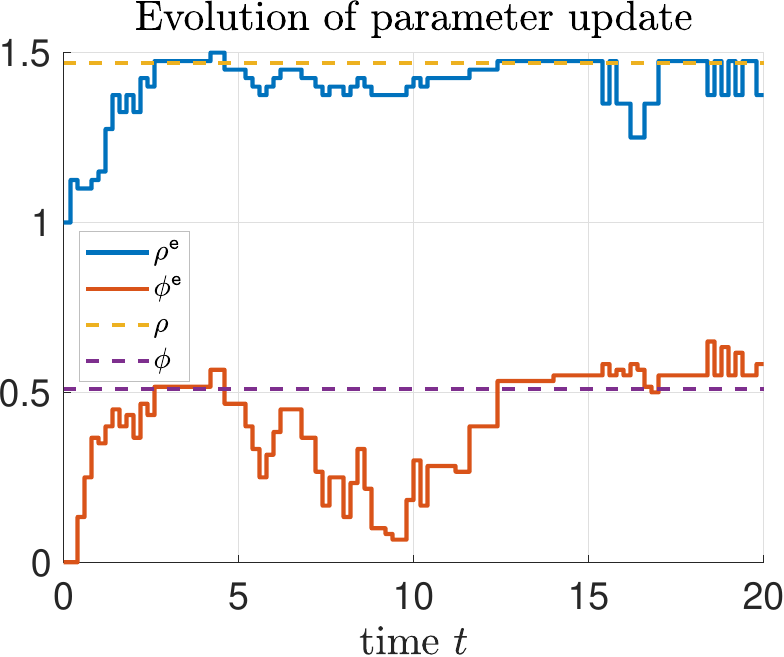}}
\caption{$\clA_\sigma$ as in~\eqref{Aex-tpd}. $\sigma\notin\varSigma=\varSigma_{(20,60)}$ as in~\eqref{Sigma.exp.per}, $(N_\rmg,\gamma)=(0,0.02)$.}
\label{fig.per_4160g0}
\end{figure}
In Fig.~\ref{fig.per_4160g0} we have taken a refined training set~$\varSigma$. The corresponding constructed  update, leads us to stabilizing properties for the control input which are analogous to those observed in Fig.~\ref{fig.per_2130g0}.
These two figures  show that we obtain a stabilizing input, though the obtained update~$\sigma^\tte$ is not necessarily the closest to the true unknown parameter~$\sigma$. We can also confirm the difficulties/inability on the identification of the parameter~$\sigma=(\rho,\phi)$ we have discussed in Example~\ref{Ex:per1}.
 Motivated by that discussion, we would like to know the error on the identification of the matrix~$\clA_\sigma$ characterizing the dynamics, instead.

  We plot the results in Fig.~\ref{fig.per-ode-difAsig}, utilizing the function~$\Psi$ defined in~\eqref{Aex-tpd}.
\begin{figure}[ht]
\centering
\subfigure
{\includegraphics[width=0.45\textwidth,height=0.375\textwidth]{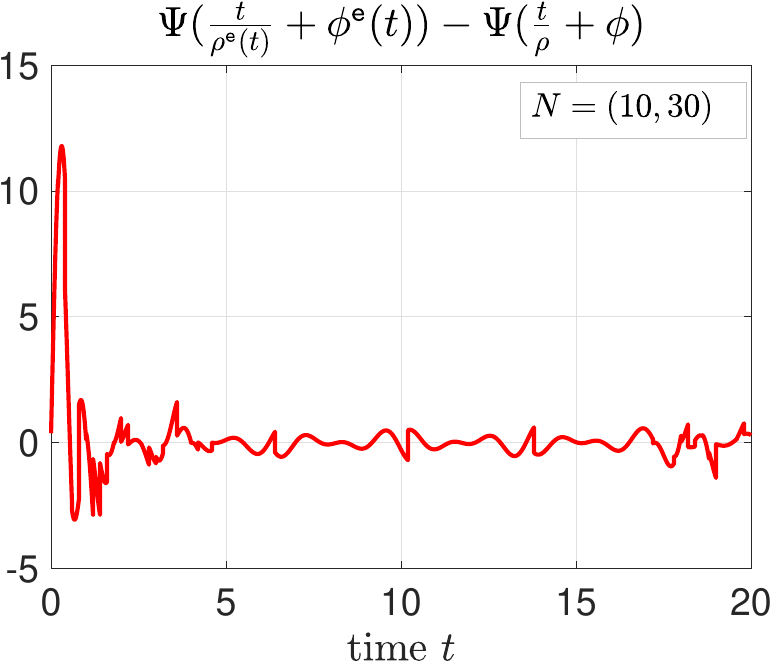}}
\qquad
\subfigure
{\includegraphics[width=0.45\textwidth,height=0.375\textwidth]{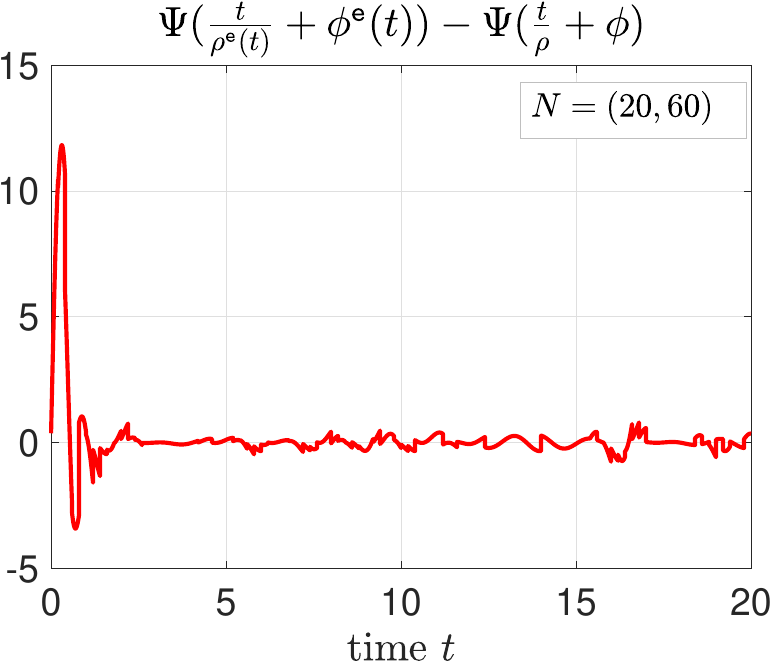}}
\caption{Dynamics identification. $\sigma\notin\varSigma_N$ as in~\eqref{Sigma.exp.per}, $(N_\rmg,\gamma)=(0,0.02)$.}
\label{fig.per-ode-difAsig}
\end{figure}
 We see that this error reaches and remains in a neighborhood of zero after some time, even if the parameter mismatch, in Figs.~\ref{fig.per_2130g0} and~\ref{fig.per_4160g0}, is not small.  We confirm that the strategy will attempt at estimating~$\clA_\sigma$ rather than~$\sigma$.

\begin{figure}[ht]
\centering
\subfigure
{\includegraphics[width=0.45\textwidth,height=0.375\textwidth]{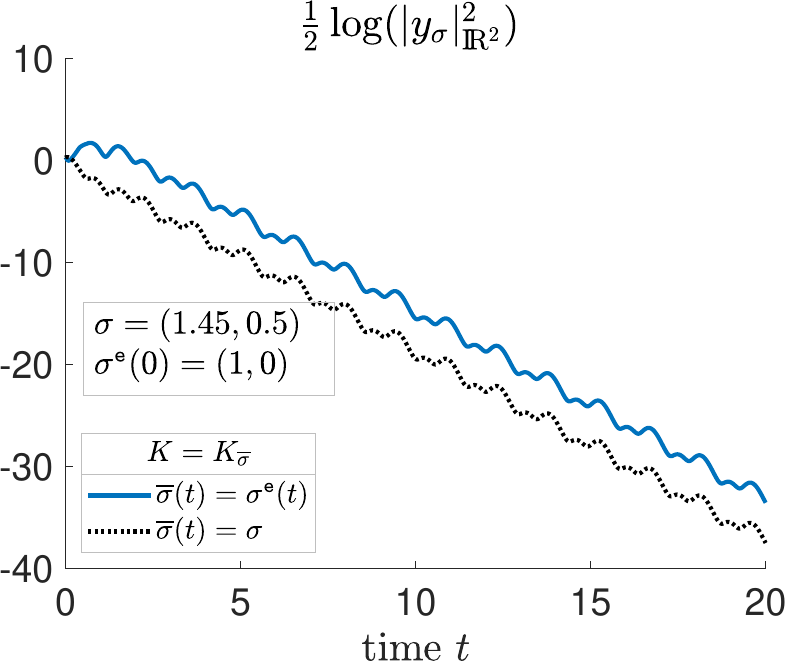}}
\qquad
\subfigure
{\includegraphics[width=0.45\textwidth,height=0.375\textwidth]{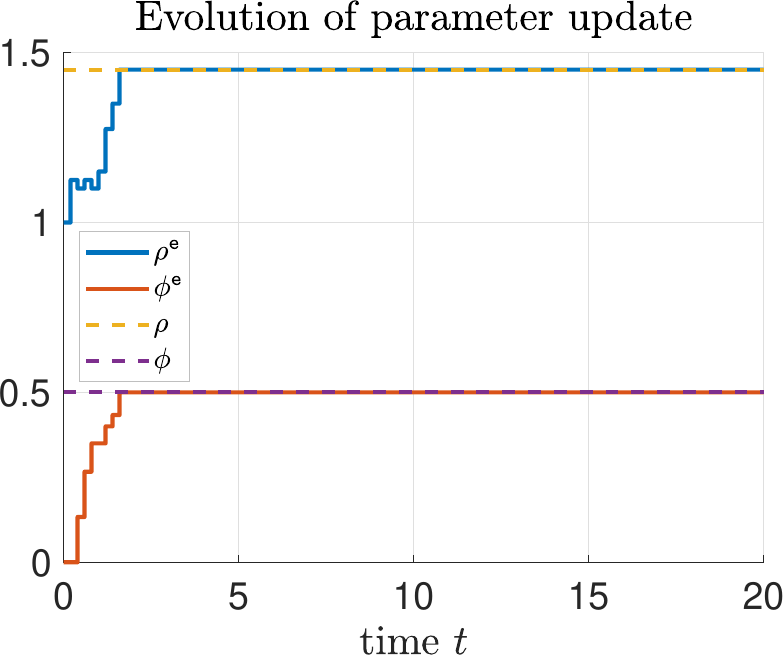}}
\caption{$\clA_\sigma$ as in~\eqref{Aex-tpd}. $\sigma\in\varSigma=\varSigma_{(20,60)}$ as in~\eqref{Sigma.exp.per}, $(N_\rmg,\gamma)=(0,0.02)$.}
\label{fig.per_4160g0_IN}
\end{figure}
Finally, in Fig.~\ref{fig.per_4160g0_IN} we consider the case where the true parameter belongs to the training set~$\varSigma=\varSigma_{(20,60)}$, namely, we take~$\sigma=(1.475,0.5)$. Note that, this is a perturbation of the parameter~$(1.47,0.51)$ that we have taken before in Fig.~\ref{fig.per_4160g0}. But, now the minimum in~\eqref{sig-update-intro-fkC} equals~$0$ (exactly), as soon as~$\sigma\in\varSigma^*$. In Fig.~\ref{fig.per_4160g0}, we see that the parameter update reaches the exact value of~$\sigma$ after a few number of updates,  and also that future updates remain at~$\sigma$.
Further, graphically,  the stabilizing performances in Figs.~\ref{fig.per_2130g0}, \ref{fig.per_4160g0}, and~\ref{fig.per_4160g0_IN} are similar. This is expected for dynamics which are close to each other.

\subsubsection{Robustness against state estimation errors}
We return again to the setting in Fig.~\ref{fig.per_4160g0} where~$\sigma=(1.47,0.51)$ is not in the training set~$\varSigma=\varSigma_{(20,60)}$. We want to check the robustness of the strategy against state measurement errors at the concatenating times~$t=j\tau$, $j\in\bbN$. Note that the auxiliary systems in~\eqref{sys-Feed-intro-aux} are solved assuming the exact knowledge of the state at these time instants; see step~\ref{Alg:Onl.run-auxil}~\ref{Alg:Onl.run-auxil1} in Algorithm~\ref{Alg:Onl-1int}. In applications we may need to use an estimate of this state, so we will consider the case where we know these states up to a measurement error, namely, we let the true system~\eqref{sys-Feed-intro}  (i.e., with the true parameter~$\sigma$) run for time~$t\in I_j^{\tau}=((j-1)\tau,j\tau)$ with its initial state~$y^{\rm old}\coloneqq y_{\sigma}((j-1)\tau)$, but we solve the auxiliary systems, in Algorithm~\ref{Alg:Onl-1int}, with the ``measured'' initial state, at~$t=t_0=(j-1)\tau$, as
\begin{equation}
y_{\sigma_i^*}((j-1)\tau)=y^{\rm old}+\eta_{\rm mag}(v_{\tt rand}(j)-0.5);\qquad\quad y^{\rm old}= y_{\sigma}((j-1)\tau).
\end{equation}
Here~$v_{\tt rand}(j)\in\bbR^{2\times1}$ is a column vector with random entries uniformly distributed in~$[0,1]$; in simulations, this vector  was generated by the Matlab function~$\tt rand$. The scalar~$\eta_{\rm mag}$ sets the magnitude of the measurement error. In Fig.~\ref{fig.noise_ode},
we find the results for several~$\eta_{\rm mag}= \xi_1\cdot10^{\xi_2}$ and see that our strategy is robust against such state error measurements,  in the sense that the constructed input will drive the state to (and keep it in) a ball centered at zero with radius~$R_{\eta_{\rm mag}}$ depending on~$\eta_{\rm mag}$ and, furthermore,~$R_{\eta_{\rm mag}}$ likely converges to~$0$ when~$\eta_{\rm mag}$ does.
\begin{figure}[ht]
\centering
\subfigure
{\includegraphics[width=0.45\textwidth,height=0.375\textwidth]{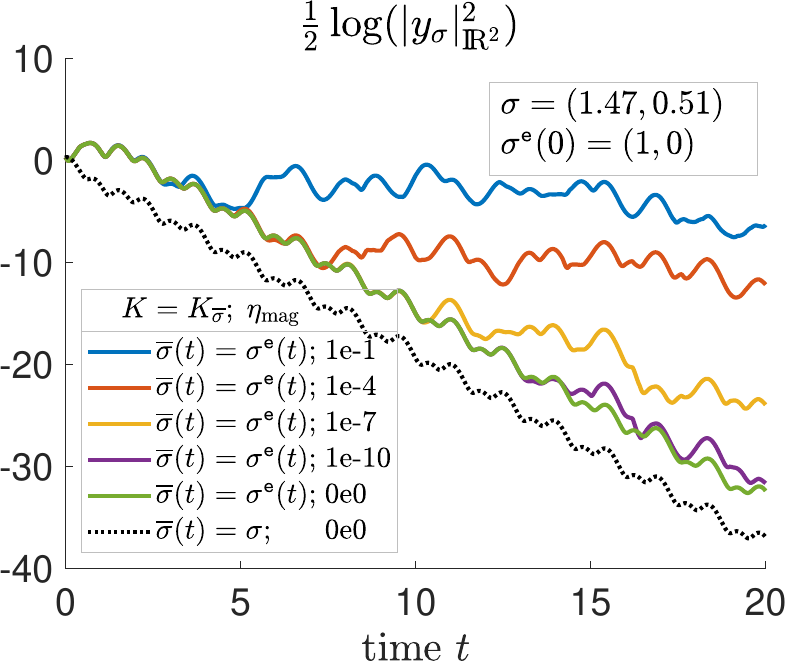}}
\caption{$\clA_\sigma$ as in~\eqref{Aex-tpd}. $\sigma\notin\varSigma=\varSigma_{(20,60)}$ as in~\eqref{Sigma.exp.per}, $(N_\rmg,\gamma)=(0,0.02)$.}
\label{fig.noise_ode}
\end{figure}

\smallskip
The time-periodic Riccati equations were solved using a Crank--Nicolson scheme as proposed in~\cite[sect.~3.5]{Rod23-eect}, with temporal-step~$t^{\rm step,ric}=10^{-2}$.
As before, the dynamical systems were solved by a Crank--Nicolson--Adams--Bashforth scheme  with temporal step~$t^{\rm step}=10^{-3}$.

\subsection{Parabolic time-periodic dynamics}\label{sS:num-PDE}
Here we consider the case where the state is an element of an infinite-dimensional space. Namely, we consider the dynamics given by a time-periodic diffusion-reaction-convection parabolic equation. We assume that the time-period and the time-phase are known, but we are uncertain about a parameter~$\sigma\in[0,2\pi)$ in the convection term.  Namely, we consider system~\eqref{sys-intro} with
\begin{subequations}\label{Aex-parab}
\begin{align}
 \clA_\sigma y&\coloneqq-(-\nu\Delta+\Id) y-a y -b_\sigma\cdot\nabla y\\
 Bu&\coloneqq\sum_{j=1}^m u_j\indf_{\omega_j},\qquad Cy\coloneqq \Xi P_{\clE_p^\rmf}y.
   \end{align}
\end{subequations}
The state~$y=y(t,x)$ is defined for time~$t>0$ and~$x=(x_1,x_2)\in\Omega\coloneqq(0,1)\times(0,1)$. The initial state~$y_0$ and the diffusion coefficient~$\nu$, were taken as
\begin{align} \label{inidataPDE}
y(0,x)&=y_0(x)\coloneqq1-3x_1\sin(x_1),\qquad \nu\coloneqq0.1.
 \end{align}
  We take the reaction~$a$ and convection~$b_\sigma$ coefficients as follows,
\begin{subequations}\label{dataSystem}
\begin{align}
a(x)&=-(1+5\sin(2\pi t))(1+x_1),\qquad
b_\sigma=\begin{bmatrix}-\tfrac12\cos(\sigma)\\ \sin(\sigma)\end{bmatrix}.
\end{align}
\end{subequations}
 with an uncertain parameter~$\sigma$. In particular, we are uncertain about the direction~$b_\sigma$ the state is being transported in.
We chose Neumann boundary conditions, that is,~$\Delta$ is understood as the Neumann Laplacian with domain~$\rmD(\Delta)$ satisfying
\begin{align}
\rmD(\Delta)=\{h\in W^{2,2}(\Omega)\mid \tfrac{\p h}{\p\bfn}\rest{\p\Omega}=0\}\subset V\coloneqq W^{1,2}(\Omega)\subset H\coloneqq L^{2}(\Omega),
 \end{align}
 where~$\bfn$ stands for the unit outward normal vector to the boundary~$\p\Omega$ of~$\Omega$.

The control operator maps a control input~$u(t)\in\bbR^m$ into the linear span of indicator functions $\indf_{\omega_j}$ of given subsets~$\omega_j\subseteq\Omega$. The output is given by the coordinates of
 the orthogonal projection of the state onto the linear span~${\clE_p^\rmf}=\linspan\{e_j\mid 1\le j\le p\}$ of ``the'' first~$p$ Neumann eigenfunctions~$e_j=e_j(x)$ of the Laplacian. More precisely, $P_{\clE_p^\rmf}$ stands for the orthogonal projection operator onto~${\clE_p^\rmf}$ and~$\Xi$ denotes the operator giving us the coordinates of such projection, that is,
 \begin{equation}\notag
 z=\Xi w\quad\Longleftrightarrow\quad w=\sum_{j=1}^p z_i e_i.
 \end{equation}

We consider~$H$ as a pivot space,~$H'=H$, and we  can see that~$\clA_\sigma\colon V\to V'$ maps~$V$ into its continuous dual~$V'$.
 We take controls~$u\in L^2_{\rm loc}(\bbR_+,\bbR^m)$. Then the state evolves in the Hilbert space~$H$.
 We fix a  training  parameters set as
\begin{equation}\label{Sigma.exp.perPDE}
\varSigma=\varSigma_{N}\coloneqq\left\{\sigma_i=i\tfrac{2\pi}{\overline N}\mid 0\le i\le {N-1}\right\}\subset[0,2\pi).
\end{equation}
Note that~$\clA_\sigma$ is time periodic with period~$\rho=1$. For each parameter~$\sigma_i\in\varSigma$, following~\cite{Rod23-eect}, we solve the Riccati equation~\eqref{Ricc-sig-feed} with~$Q=\Id$ in a coarse spatial triangulation~$\clT^{[0]}$ of~$\Omega$, with~$n^{[0]}$ points and with temporal step~$t^{\rm step}_0=t^{\rm step,ric}$. The corresponding feedback input operators~$K_{\sigma_i}(t)=-B^*\Pi_{\sigma_i}(t)\in\bbR^{m\times n^{[0]}}$ are stored in a library~$\fkL_\varSigma$, for each time instant~$t=kt^{\rm step}_0$, $0\le k<(t^{\rm step}_0)^{-1}$ in a/the discrete mesh of~$[0,1)$.

The auxiliary systems are solved in a spatially refined mesh~$\clT^{[1]}$  with~$n^{[1]}$ points, obtained after~$1$ regular refinement of~$\clT^{[0]}$, and with time step~$t^{\rm step}_1$.

The true system is run in a further refined spatial mesh~$\clT^{[2]}$  with~$n^{[2]}$ points, obtained after~$2$ consecutive regular refinements of~$\clT^{[0]}$, and with time step~$t^{\rm step}_2$.

We may think of the true system running in continuous time, whereas the fictitious auxiliary systems are solved numerically. For this reason, we took a coarser mesh for the auxiliary systems. Further, we have also stored the feedback operators corresponding to the coarsest spatial mesh; this possibility could be important in case we have constraints on the available storage, but it is also a fact that solving the Riccati equations in a fine grid can be a challenging numerical task.

Thus, we fix a coarse spatial triangulation~$\clT^{[0]}$ of~$\Omega$, and consider three levels of spatio-temporal refinements~$\bfR=(\clT^{[{\tt r}]},t^{\rm step})$, $0\le{\tt r}\le2$, namely,
\begin{equation}\notag
\bfR_{\rm ric}=(\clT^{[0]},{10^{-2}}),\qquad\bfR_{\rm aux}=(\clT^{[1]},{10^{-3}}),\qquad\bfR_{\rm true}=(\clT^{[2]},{10^{-4}}).
\end{equation}
where we solve, respectively, the Riccati equations~\eqref{Ricc-sig-feed}, the auxiliary systems~\eqref{sys-Feed-intro-aux} and the true system~\eqref{sys-Feed-intro}.
 The spatial triangulations are shown in Fig.~\ref{fig.mesh0}, where we also show the support~$\omega_i$ of the~$m=4$ actuators.
 \begin{figure}[ht]
\centering
\subfigure
{\includegraphics[width=0.3\textwidth,height=0.25\textwidth]{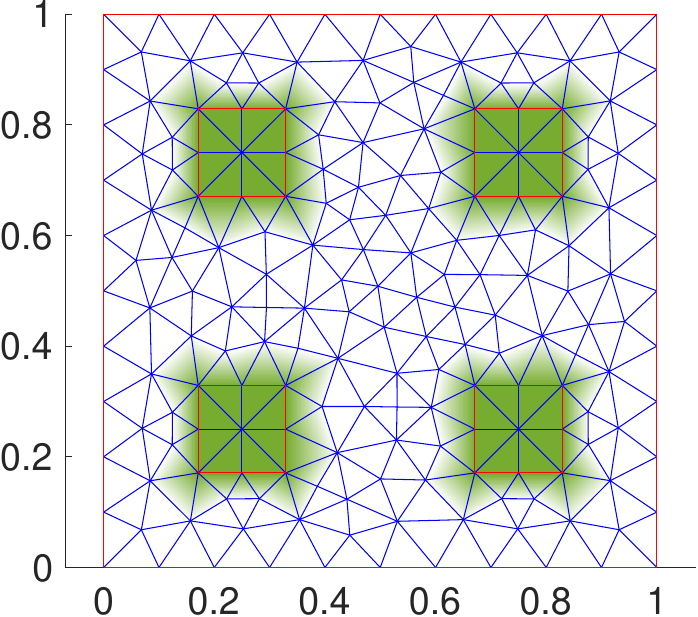}}
\quad
{\includegraphics[width=0.3\textwidth,height=0.25\textwidth]{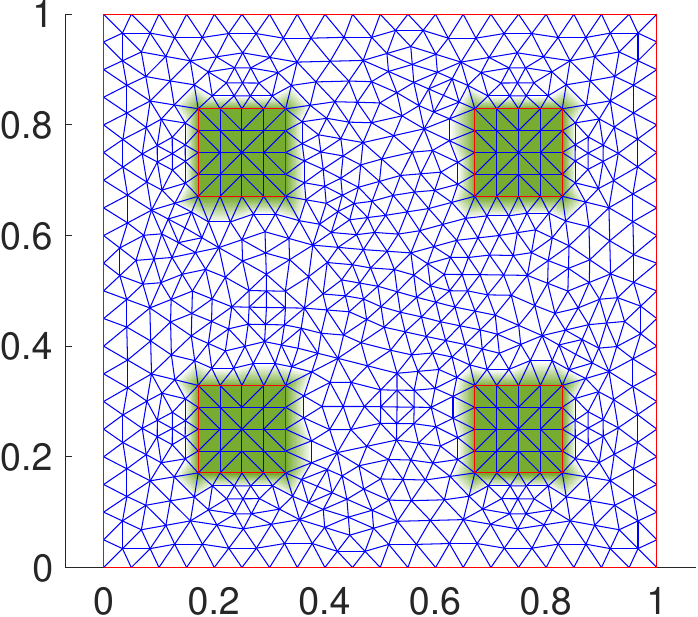}}
\quad
{\includegraphics[width=0.3\textwidth,height=0.25\textwidth]{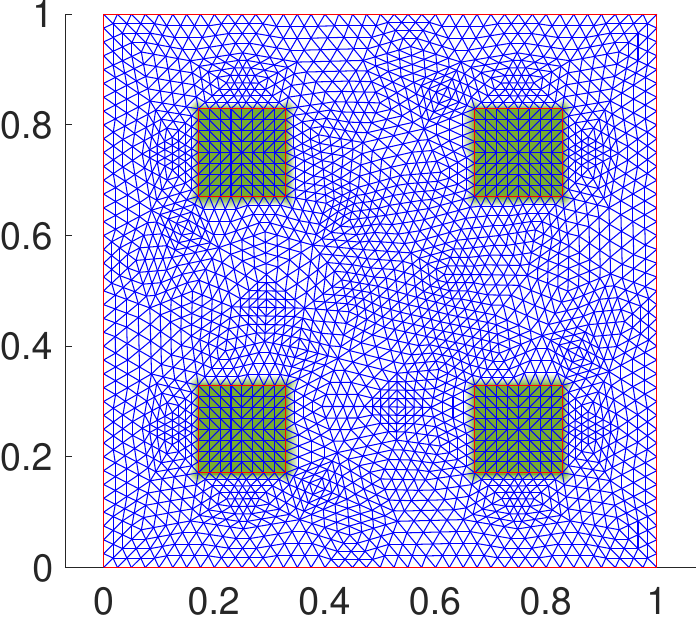}}
\caption{Spatial triangulations and supports of actuators.}
\label{fig.mesh0}
\end{figure}

Let us assume that the true parameter~$\sigma$ and our initial guess~$\sigma^\tte(0)$ are
\begin{equation}\label{sigsige-num-PDE}
\sigma=0.7\quad\mbox{and}\quad\sigma^\tte(0)=0.
\end{equation}

We have chosen the updating time-horizon as~$\tau=0.1$.

\subsubsection{Instability of free dynamics and need of a parameter update}
Fig.~\ref{fig.PDE_free} shows  that the free dynamics is unstable and Fig.~\ref{fig.PDE_noup}  shows that the initial guess~$\sigma^\tte(0)$ is not good enough in order to obtain a stabilizing input, hence in order to stabilize the system, an input is needed as well as an update of our guess. The result shown in Fig.~\ref{fig.PDE_noup} corresponds to a run of our algorithm with~$(N_\rmg,\gamma)=(0,0)$.
\begin{figure}[ht]
\centering
\subfigure[Free dynamics.\label{fig.PDE_free}]
{\includegraphics[width=0.45\textwidth,height=0.375\textwidth]{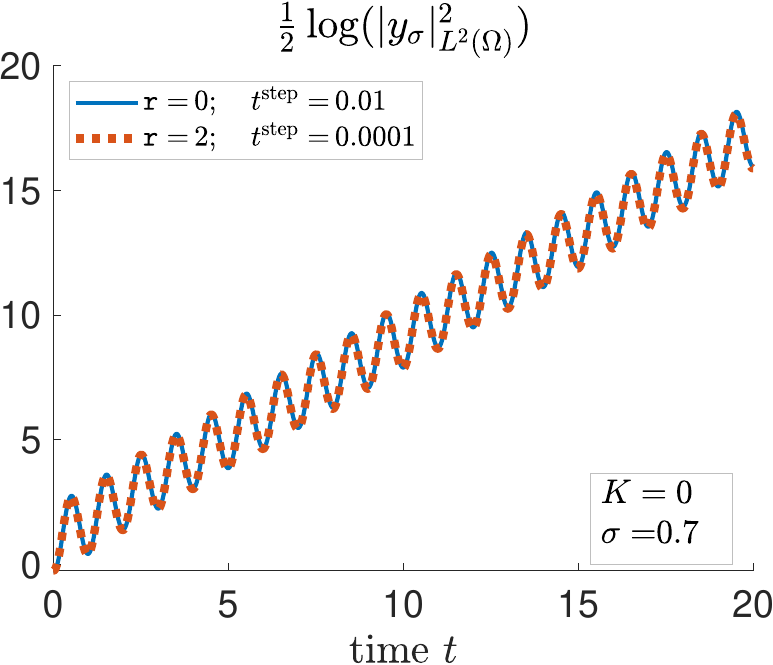}}
\qquad
\subfigure[No parameter update, $(N_\rmg,\gamma)=(0,0)$.\label{fig.PDE_noup}]
{\includegraphics[width=0.45\textwidth,height=0.375\textwidth]{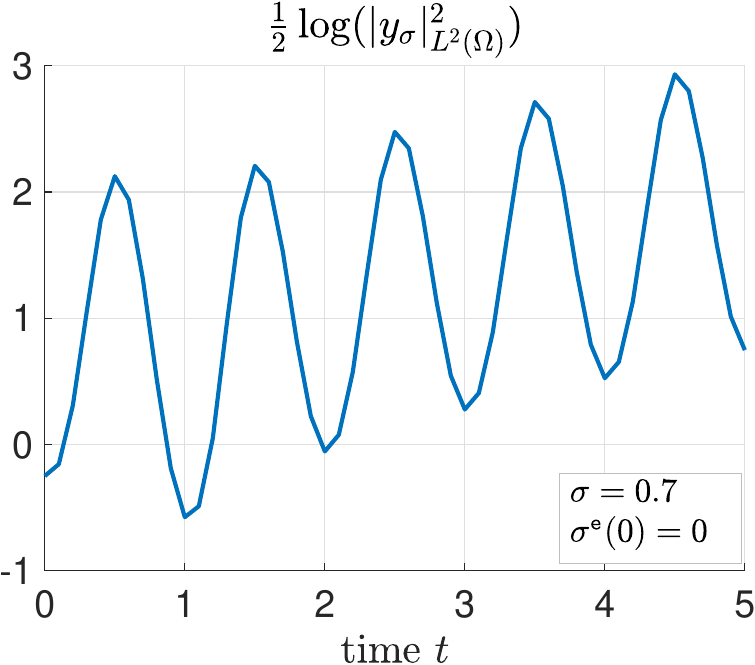}}
\caption{$\clA_\sigma$ as in~\eqref{Aex-parab}. Instability with no control and with no feedback control update.}
\label{fig.PDE_free_and_noup}
\end{figure}
\begin{remark}
The results in Fig.~\ref{fig.PDE_free} are shown for the coarsest and the finest spatial triangulations~$r\in\{0,2\}$, these results show that with the coarsest mesh we are already able to accurately capture the evolution  of the norm of the state for the free dynamics.
\end{remark}

\subsubsection{Using the proposed parameter update strategy}
By following the proposed parameter update strategy, with~$\varSigma=\varSigma_8$ as in~\eqref{Sigma.exp.perPDE} we are provided with a stabilizing feedback input, as shown in Fig.~\ref{fig.PDE_updated}. Furthermore, note that in this test we have taken~$\gamma=1$, which allows the  element~$\tfrac{\pi}{4}=1\cdot\tfrac{2\pi}{8}\in\varSigma_8$ to be in the training subset~$\varSigma^*$ in the first updating time interval~$I_1^\tau=(0,\tau)$. Note that this is the closest element, in~$\varSigma_8$, to the true parameter~$\sigma=0.7$. We see that at the end of this interval the parameter is updated to~$\tfrac{\pi}{4}$, and future updates take the same value. So in this example, besides providing us with a stabilizing input the strategy is also able to identify the closest training parameter to~$\sigma$.
\begin{figure}[ht]
\centering
\subfigure
{\includegraphics[width=0.45\textwidth,height=0.375\textwidth]{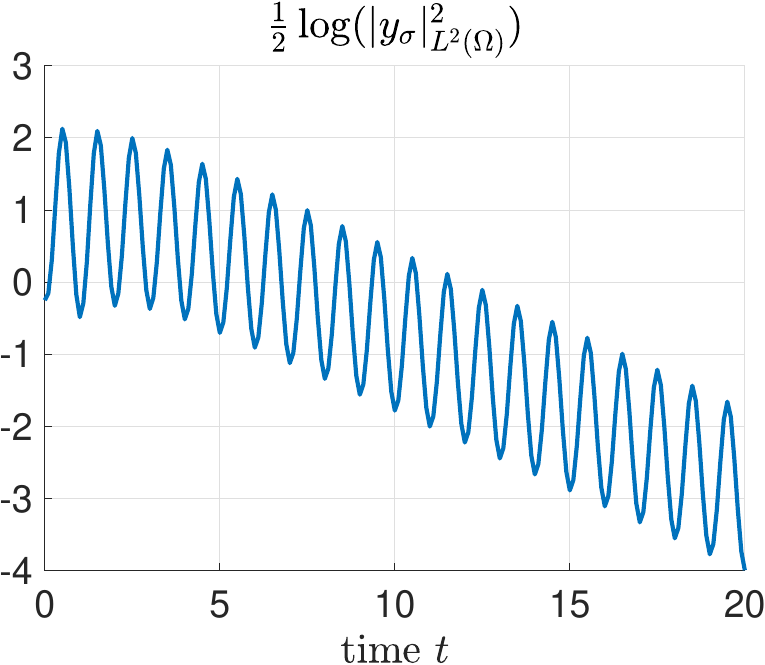}}
\qquad
\subfigure
{\includegraphics[width=0.45\textwidth,height=0.375\textwidth]{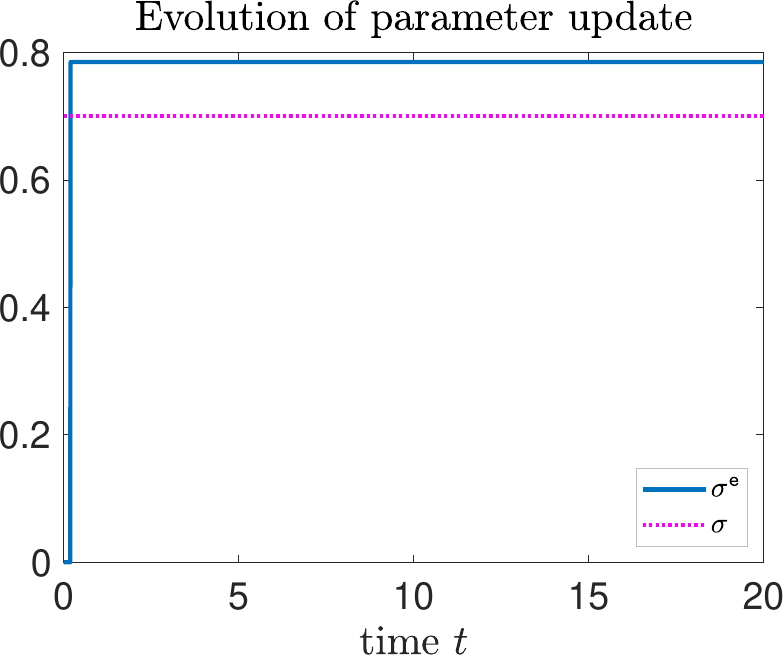}}
\caption{With updated parameter, $(N_\rmg,\gamma)=(0,1)$.}
\label{fig.PDE_updated}
\end{figure}

\subsubsection{On the response to switching true parameters}\label{sec:switching}
We show that the strategy is able to respond to changes in the true parameter. Thus, we can apply it to the case of a  piecewise constant true parameter~$\sigma=\sigma(t)$. However, to achieve stabilization we may need the dwell times  to be large enough. Recall that switching between stable systems can result in an unstable system, see~\cite[Prop.~1]{AkarPaulSafoMitra06} and~\cite[Prob.~A]{LiberzonMorse99}.

The response to parameter changes is confirmed in Fig.~\ref{fig.PDE_sigma_t}, where we can see that the constructed input is stabilizing. Furthermore, in this example the strategy is again able to identify the closest training parameters (below and above) to~$\sigma(t)$.
\begin{figure}[ht]
\centering
\subfigure
{\includegraphics[width=0.45\textwidth,height=0.375\textwidth]{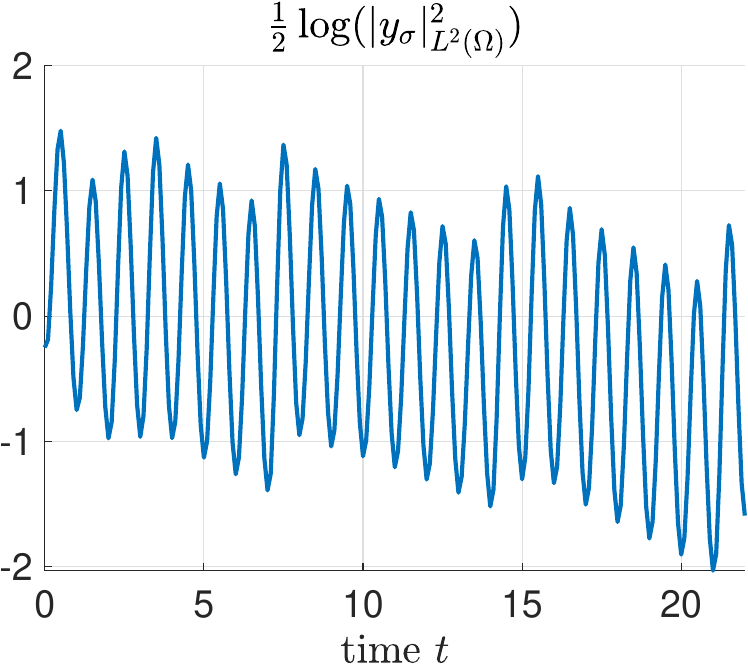}}
\qquad
\subfigure
{\includegraphics[width=0.45\textwidth,height=0.375\textwidth]{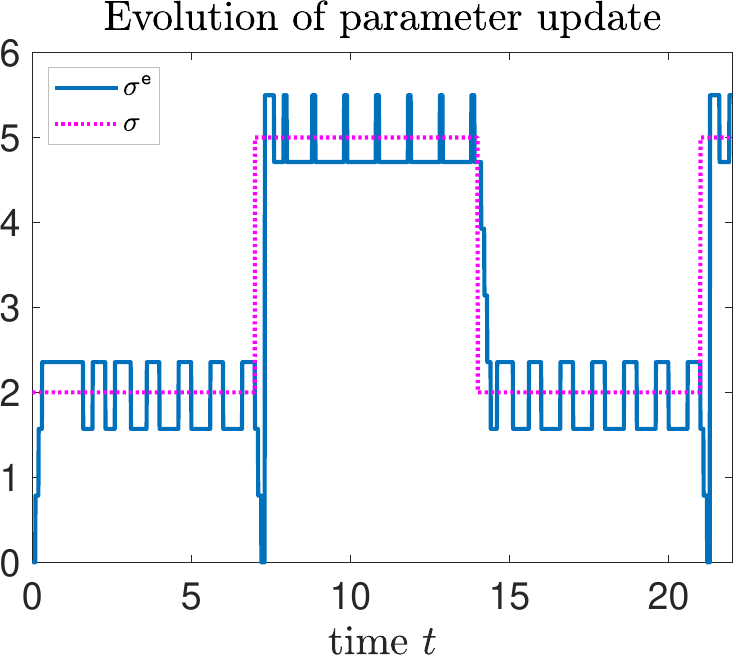}}
\caption{Piecewise constant, slowly switching, parameters $\sigma(t)\notin\varSigma$.}
\label{fig.PDE_sigma_t}
\end{figure}
Note, however, that at the switching times~$t\in\{7,14,21\}$ we observe a transient period where the norm of the state increases in a relevant manner. This suggests that for the family of stable systems~$\{\clA_\varsigma^K=\clA_\varsigma+BK_\varsigma\mid \varsigma\in\varSigma\}$, we may indeed loose stability under fast switching parameters.  This is confirmed in Fig.~\ref{fig.PDE_sigma_t_real} where we have taken
shorter dwell times.
\begin{figure}[ht]
\centering
\subfigure
{\includegraphics[width=0.45\textwidth,height=0.375\textwidth]{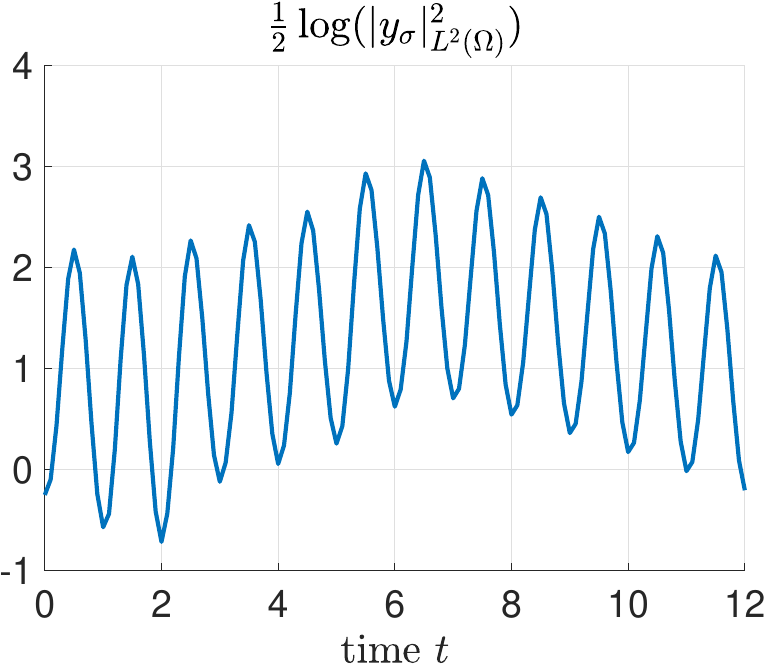}}
\qquad
\subfigure
{\includegraphics[width=0.45\textwidth,height=0.375\textwidth]{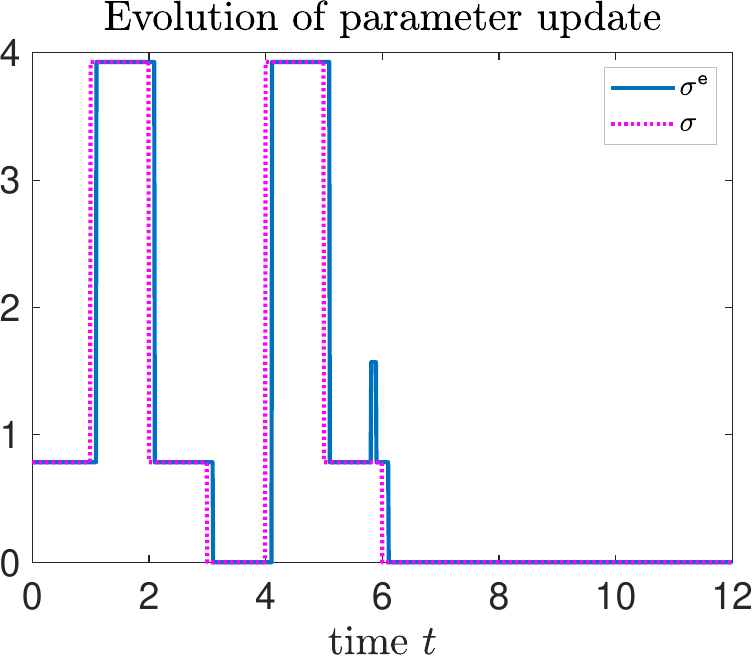}}
\caption{Piecewise constant, quickly switching, parameters $\sigma(t)\in\varSigma$.}
\label{fig.PDE_sigma_t_real}
\end{figure}
 Furthermore, in Fig.~\ref{fig.PDE_sigma_t_real} we have taken the true (piecewise constant) parameter~$\sigma(t)$ with values in the training set, and with the exact/correct initial guess. We see that our strategy provides us with the correct values for~$\sigma(t)$ in every but one updating time interval~$I_{j_0}^\tau\subset(5,6)$. In spite of the correct identification, we can  see that the controlled dynamics shows instability up to time~$t=6$, when the  parameter is switching fast. Thus, in this example, even with exact identification of~$\sigma(t)$ we may obtain an unstable system (cf.~\cite[Prop.~1]{AkarPaulSafoMitra06} and~\cite[Prob.~A]{LiberzonMorse99}). Therefore, in the case of fast switching parameters our strategy will fail. To avoid this we will likely need to include the dwell time(s) as part of the uncertainty. Finally, note that any switching system between the dynamics given by the elements in~$\{\clA_\varsigma^K\mid \varsigma\in\varSigma\}$ will be stable for large enough dwell time~$T$, indeed, if every~$\clA_\varsigma^K$ is $(C,\mu)$-stable with~$C\ge1$ and~$\mu>0$ (cf.~Def.~\ref{D:stable}), then we can see that any switching system will be stable for~$T>\mu^{-1}\log(C)$, because at time~$t=jT$ we will have~$\norm{y(jT)}{H}\le C\rme^{-\mu T }\norm{y((j-1)T)}{H}\le C^j\rme^{-\mu jT }\norm{y(0)}{H}=(C\rme^{-\mu T })^j\norm{y(0)}{H}$.

\section{Comparison to robust a-priori feedback}
Let us revisit the finite-dimensional time-periodic dynamics from Section~\ref{sS:num-per} in the special case~$\sigma = (1,\phi)$, i.e., with~$\rho =1$, and
\begin{equation}\label{eq:robA}
\clA_{\sigma}(t) = \clA_{(1,\phi)}(t) = \Psi(t+\phi) \begin{bmatrix} 0 &1 \\ 1& 0\end{bmatrix},\quad\mbox{with}\quad \Psi(s) = 1+6\sin(2\pi s),
\end{equation}
and with uncertain time-phase~$\phi \in [0,1)$. Following the approach in \cite{GuthKunRod23-arx}, we construct a robust a priori feedback~$K_{\varSigma}=-B^{\top} \clE^{\top} \bfPi_{\varSigma} \clE$, based on a finite ensemble of candidate parameter values~$\varSigma = (\sigma_i)_{i=1}^N$. Here~$\clE:\bbR^n \to \bbR^{nN}$,~$x \mapsto \begin{bmatrix} x^\top, x^\top, \ldots, x^\top \end{bmatrix}^\top$ and~$\bfPi_\varSigma \in \bbR^{nN\times nN}$ solves the~$1$-periodic Riccati equation
\begin{equation}\label{eq:extRiccati}
\dot\bfPi_{\varSigma}+\clA_{\varSigma}^*\bfPi_{\varSigma}+\bfPi_{\varSigma} \clA_{\varSigma}-\bfPi_{\varSigma}\clE B B^*\clE^\top \bfPi_{\varSigma} +\frac{1}{N}\clE C^*C \clE^\top=0,\quad\bfPi_{\varSigma}(t+1)=\bfPi_{\varSigma}(t),
\end{equation}
where~$B$ and~$C$ are chosen as in~\eqref{BCQ-ode-per}, and~$\clA_\varSigma$ is a block diagonal matrix containing the matrices~$(\clA_\sigma)_{i=1}^N$.

We compare the robust feedback~$K_{\varSigma}$ to the feedback~$K_\sigma$ corresponding to the true parameter as well as to the feedback~$K_{\sigma^\tte}$ provided by the adaptive approach (cf.~Sect.~\ref{sS:num-per}) and present results using $N=8$ candidate parameters collected in the ensemble~$\varSigma = \left\{\left(1,~0.05 + \frac{i-1}{N} \right)\mid 1\le i \le N\right\}$ and true time-phase~$\phi= 0.51$. For the adaptive approach we choose the update time as $\tau = 0.2$ and initial guess~$\sigma^\tte(0)=(1,0.05)$. In each time interval $\clI_j^\tau = ((j-1)\tau,j\tau)$ we take $\varSigma^* = \varSigma_l \cup \varSigma_g$, with local parameters $\varSigma_l \subseteq \varSigma$ is a ball with radius $\gamma = 0.1$ centered around the latest update and with no random global parameters, $N_g = 0$; see Algorithm~\ref{Alg:Onl-concat}.

In this example the stabilization rate of the robust feedback~$K_\varSigma$ is faster than the stabilization rate of the optimal feedback~$K_\sigma$ corresponding to the true parameter~$\sigma=(1,0.51)$, which in turn is faster than the stabilization rate of the adaptive feedback~$K_{\sigma^{\tt e}}$ (see, Fig.~\ref{fig:robust}). As expected, the feedback~$K_\sigma$ leads to the smallest cost: let~$y_\sigma$ denote the states controlled using the feedback~$K_\sigma$, let~$y_{\sigma^{\tt e}(t)}$ be the states controlled using the adaptive feedback~$K_{\sigma^{\tt e}(t)}$, and let~$y_\varSigma$ be the states controlled using the robust feedback~$K_\varSigma$, then we obtain 
\begin{align*}
\int_0^{20} \left(\|Cy_{\sigma}(t)\|_{\bbR}^2 +  \|K_\sigma y_{\sigma}(t)\|_{\bbR}^2\right)\rm dt \approx 0.0463,\\
\int_0^{20} \left(\|Cy_{\sigma^{\tt e}(t)}(t)\|_{\bbR}^2 + \|K_{\sigma^{\tt e}(t)}\, y_{\sigma^{\tt e}(t)}(t)\|_{\bbR}^2\right) \rm dt \approx 1.1766,\\
\int_0^{20} \left(\|Cy_{\varSigma}(t)\|_{\bbR}^2 + \|K_\varSigma\, y_{\varSigma}(t)\|_{\bbR}^2\right) \rm dt \approx 2.5197. 
\end{align*}
\begin{figure}[ht]
\centering
\subfigure
{\includegraphics[width=0.45\textwidth,height=0.375\textwidth]{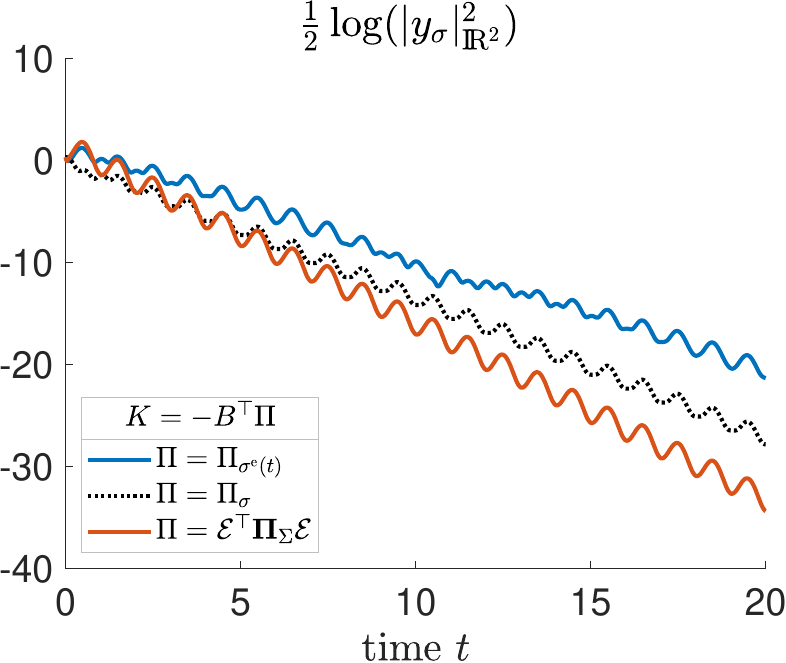}}
\qquad
\subfigure
{\includegraphics[width=0.45\textwidth,height=0.375\textwidth]{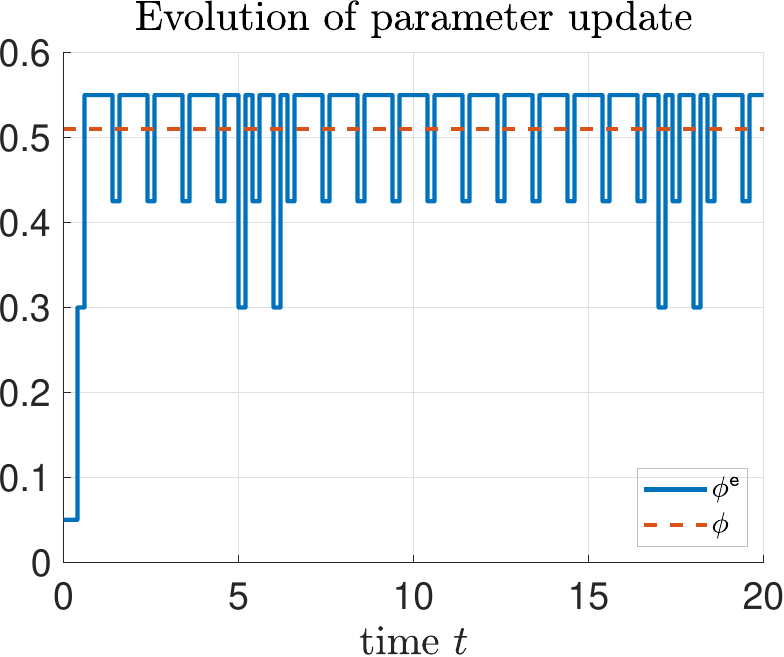}}
\caption{Comparison of adaptive, optimal, and robust feedback with $\clA_\sigma$ as in~\eqref{eq:robA}. }
\label{fig:robust}
\end{figure}
The parameter update is shown in Fig.~\ref{fig:robust}.  Comparing to section~\ref{sS:num-per}, we see that, in the present situation where we assume that the time-period is known, the algorithm performs better concerning the  identification of the true parameter.
\subsection{Stabilizability and detectability of ensembles}
In order to ensure the existence of a unique solution~$\bfPi_\varSigma$ of \eqref{eq:extRiccati} we recall from \cite[Thm.~3]{SilvermanMeadows67} that~$(\clA_\varSigma,\clE B)$ is controllable (and thus stabilizable) on a nonempty open time interval~$(t_0,t_1)$ if~$Q_B(t)$ has full rank for some time~$ t\in(t_0,t_1)$, where
\begin{equation*}
Q_B(t) \coloneqq \begin{bmatrix} P_0(t) & P_1(t)& \ldots P_{nN-1}(t) \end{bmatrix},
\end{equation*}
with 
\begin{equation*}
P_{k+1}(t) \coloneqq -\clA_{\varSigma}(t) P_k(t) + \dot{P}_k(t),\quad P_0(t) = \clE B.
\end{equation*}
Similarly, from \cite[Thm.~5]{SilvermanMeadows67} we know that~$(\clA_\varSigma,C \clE^\top)$ is observable (and thus detectable) on~$(t_0,t_1)$ if $Q_C(t)$ has full rank for some time~$ t\in(t_0,t_1)$, where
\begin{equation*}
Q_C(t) \coloneqq \begin{bmatrix} S_0(t) & S_1(t)& \ldots S_{nN-1}(t) \end{bmatrix},
\end{equation*}
with 
\begin{equation*}
S_{k+1}(t) \coloneqq \clA^\ast_{\varSigma}(t) S_k(t) + \dot{S}_k(t),\quad S_0(t) = \clE C^\ast.
\end{equation*}
Let us consider, for instance, the case $N=2$. Denoting $\varphi_i(t) \coloneqq \Psi(t+\phi_i)$, $i \in \{1,2\}$ we have
\begin{equation*}
Q_B(t) = \begin{bmatrix} 
0 & -\varphi_1(t) & -\frac{\rm d}{\rm dt} \varphi_1(t) & -\varphi_1^3(t) - \frac{\rm d^2}{\rm dt^2} \varphi_1(t)\\
1 & 0 & \varphi_1^2(t) & \varphi_1(t) \frac{\rm d}{\rm dt} \varphi_1(t) + \frac{\rm d}{\rm dt} \varphi_1^2(t) \\ 
0 & -\varphi_2(t) & -\frac{\rm d}{\rm dt} \varphi_2(t) & -\varphi_2^3(t) - \frac{\rm d^2}{\rm dt^2} \varphi_2(t)\\
1 & 0 & \varphi_2^2(t) & \varphi_2(t) \frac{\rm d}{\rm dt} \varphi_2(t) + \frac{\rm d}{\rm dt} \varphi_2^2(t) 
\end{bmatrix},
\end{equation*}
and
\begin{equation*}
Q_C(t) = \begin{bmatrix}
1 & 0 & \varphi_1^2(t) & \varphi_1(t) \frac{\rm d}{\rm dt} \varphi_1(t) + \frac{\rm d}{\rm dt} \varphi_1^2(t) \\
0 & \varphi_1(t) &   \frac{\rm d}{\rm dt} \varphi_1(t)  & \varphi_1^3(t) + \frac{\rm d^2}{\rm dt^2} \varphi_1(t) \\
1 & 0 & \varphi_2^2(t) & \varphi_2(t) \frac{\rm d}{\rm dt} \varphi_2(t) + \frac{\rm d}{\rm dt} \varphi_2^2(t)\\
0 & \varphi_2(t) & \frac{\rm d}{\rm dt} \varphi_2(t) & \varphi_2^3(t) + \frac{\rm d^2}{\rm dt^2} \varphi_2(t) 
\end{bmatrix}.
\end{equation*}
For the particular choice $\phi_1 = 0$ and $\phi_2 = 0.5$ we obtain
\begin{equation*}
Q_B(0.5) =  \begin{bmatrix}
0 & -1 & 12\pi &  -1\\
1 & 0 & 1 & -36 \pi\\
0 & -1 & -12\pi &  -1\\
1 & 0 & 1 & 36\pi
\end{bmatrix},\qquad Q_C(0.5) =  \begin{bmatrix}
1 & 0 & 1  & -36\pi \\
0 & 1 & -12\pi & 1\\
1 & 0 & 1 & 36\pi \\
0 & 1 & 12\pi & 1
\end{bmatrix},
\end{equation*} 
which both have full rank. Thus, for~$\varSigma = \{(1,0),(1,0.5)\}$, the system~$(\clA_{\varSigma},\clE B)$ is controllable and~$(\clA_\varSigma,C \clE^\top)$ is observable on nonempty open intervals containing~$t=0.5$, and thus \eqref{eq:extRiccati} admits a unique solution. Verifying that $Q_B$ and $Q_C$ have full rank for some~$t$ quickly becomes more complex as the number of candidate parameters~$N$ increases. The investigation of the general case with~$N>2$ candidate parameters is beyond the scope of this manuscript, and thus left for future research.

\section{Final remarks}\label{S:finRemks}
We have shown that if we select a finite subset of training parameters~$\varSigma\subseteq\fkS$, which is ``dense enough'' in~$\fkS$, and store, offline, the Riccati feedback inputs~$K_\varsigma$ corresponding to each parameter in~$\varsigma\in\varSigma$, then we can construct an input control, stabilizing the system for every parameter~$\sigma\in\fkS$. For this purpose we take one of the stored feedbacks~$K_\varsigma$ in time intervals~$I_j^\tau=((j-1)\tau,j\tau)$, $j\in\bbN_+$, and update~$\varsigma$, online, based on input and output data comparison in the same time intervals. This requires solving auxiliary fictitious systems online corresponding to some values in~$\varSigma$ (candidates for the true unknown~$\sigma$), which can be done in parallel (while the true plant/system is running).

\subsection{Piecewise constant~$\sigma$} We focused  in the case of constant parameters. Besides, we have seen that the strategy is able to respond to changes in the true parameter~$\sigma$ and can be applied to piecewise constant parameters~$\sigma$ as well. The obtained feedback input will be stabilizing if the intervals of constancy are long enough.

\subsection{Other types of uncertainties.}
We have focused on uncertainties in the parameter~$\sigma$. In future works, it would be important to address the robustness against other types of uncertainties, disturbances, and noisy measurements.  It would be of interest, to investigate the problem of designing an observer to estimate the state of the system and to check the robustness of the approach when coupled with such an observer.

\subsection{Worst case scenarios} Can we find~$\varsigma\in\fkS$ such that the Riccati feedback input~$K_\varsigma$ will make~$\clA_\sigma+BK_\varsigma$ exponentially stable, for all~$\sigma\in\fkS$?
In Example~\ref{sS:num-osc}, we can indeed find such a~$\varsigma$, namely, $\varsigma=1\in\fkS=[-1,1]$. In fact,~$\clA_\varsigma+BK_\varsigma$ is stable only if $K_\varsigma=\begin{bmatrix}\kappa_1&\kappa_2\end{bmatrix}$ satisfies~$\kappa_1-1<0$ and~$\kappa_2+\varsigma<0$. Thus, $\kappa_2+\sigma<0$, for all $\sigma\in\fkS$, and~$\clA_\sigma+BK_\varsigma$ is stable.
On the other hand, in Example~\ref{sS:num-PDE}, for the uncertainty in the convection direction~$b_\sigma$, $\sigma\in[0,2\pi)$, we do not expect that such a direction~$\varsigma$ exists  for the considered optimal Riccati based feedback inputs~$K_\varsigma$.

If we are given a sufficiently large  number of actuators and if we are not concerned with minimizing the spent energy, then we could consider feedback inputs other than Riccati based ones, for example, as in~\cite{Rod21-aut}. To guarantee that the feedbacks in~\cite{Rod21-aut} are stabilizing, we know that, with respect to the convection term, the number of needed actuators depends on an upper bound for a suitable norm of the convection operator, which translates, in our case, to an upper bound for the norm of the vector~$b_\sigma\in\bbR^2$. This latter norm is bounded and we conclude that an explicit feedback as in~\cite{Rod21-aut} will be able to stabilize the system for all~$\sigma\in\fkS=[0,2\pi)$.  In this manuscript we are interested in Riccati based feedbacks, which succeed to stabilize the system, while explicit ones fail if the number of actuators is too small.

\bigskip\noindent
{\bf Aknowlegments.}
S. Rodrigues gratefully acknowledges partial support from
the State of Upper Austria and Austrian Science
Fund (FWF): P 33432-NBL.

{
\bibliographystyle{plainurl}
\bibliography{Uncertain_OnOff}
}

\end{document}